\newcommand{\myblu}[1]{#1}%{\textcolor{blue}{#1} }
\newcommand{\myred}[1]{#1}%{\textcolor{red}{#1} }
\newcommand{\mycya}[1]{#1}%{\textcolor{cyan}{#1} }
\newcommand{\mypin}[1]{#1}%{\textcolor{magenta}{#1} }
\newcommand{\mygreen}[1]{#1}%{\textcolor{green}{#1} }
\newcommand{\mycomment}[1]{} %  { ***~{\bf #1}~***}}
\newtheorem{proposition}{Proposition} % often included in packages, eg svmult.cls
\newcommand\beq{\begin{equation}}
\newcommand\eeq{\end{equation}}
\newcommand{\e}{\varepsilon}
\renewcommand{\to}{\rightarrow}
 \newcommand{\bolda}{\boldsymbol{a}}
\def\Dcal{\mathcal{D}}
\def\Fcal{\mathcal{F}}
\def\Ical{\mathcal{I}}
\newcommand{\FF}{\mathbb{F}}
\newcommand{\NN}{\mathbb{N}}
\newcommand{\QQ}{\mathbb{Q}}
\newcommand{\RR}{\mathbb{R}} \newcommand{\R}{\mathbb{R}} %\def\R{\mathbb{R}}
\newcommand{\ZZ}{\mathbb{Z}}
\begin{document}

%------------------------------------------------------------
\title[Construction and % numerical
performance of kinetic schemes]{Construction and % numerical
performance \\ of % vectorial
kinetic schemes \\ for \emph{linear}
% symmetric-hyperbolic \\
systems of conservation laws
}\thanks{Thanks to ANR for its grant 15-CE01-0013 to the SEDIFLO project.
Thanks to Lucas Br\'elivet for its help as an ENPC student in numerical simulations.}
\thanks{AMS classification: 76A10; 35L45; 74D10}

%%%%%%%%%%%%%%%%%%%%%%%%% \shorttitle{}

\author{Emmanuel Audusse \and Sébastien Boyaval \and Virgile Dubos \and Minh Le}
\address{Université Paris 13, Villetaneuse, France}
\address{LHSV, ENPC, Institut Polytechnique de Paris, EDF R\&D, Chatou (sebastien.boyaval@enpc.fr) \& Inria, France}
%\address{Inria, France}
\address{LHSV, ENPC, Institut Polytechnique de Paris, EDF R\&D, Chatou, France}

%------------------------------------------------------------

\begin{abstract}
We describe a methodology to build \emph{vectorial} kinetic schemes,
targetting the numerical solution of \emph{linear} symmetric-hyperbolic systems of conservation laws
-- a minimal application case for those schemes. % for many physically-relevant applications.
Precisely, we fully detail the construction of %vectorial
kinetic schemes that satisfy a discrete equivalent to a convex extension
(an additional non-trivial conservation law) % not balance, no dissipation !
% for the sake of stability ?
of the target system -- the (linear) % particular $3\times 3$
acoustic % system and (or equivalently, the one-dimensional solution to)
and elastodynamics systems, specifically --.
Then, % in absence of a priori guarantee of convergence
we evaluate \emph{numerically} the convergence of various possible kinetic schemes toward smooth solutions,
in comparison with standard finite-difference and finite-volume discretizations on % uniformly-refined
Cartesian meshes. % natural FD schemes -- and Muscl-FV schemes ?? --
\mygreen{Our numerical results confirm the interest of ensuring a discrete equivalent to a convex extension,
and show the influence of remaining parameter variations in terms of error magnitude, % not convergence rate,
both for ``first-order'' and ``second-order'' kinetic schemes: the parameter choice with largest CFL number
(equiv., smallest spurious diffusion in the equivalent equation analysis) has the smallest discretization error.}
\end{abstract}

% \begin{resume}
% But Stratégie Ici
% \end{resume}

% \begin{keywords}
%  symmetric-hyperbolic PDEs ; % Friedrichs symmetric-hyperbolic systems = linear
%  numerical solutions ;
%  Finite-Volume schemes ;
%  kinetic schemes
% \end{keywords}

%\begin{AMS} 65B35 \end{AMS}

\maketitle

\section{Introduction}
\label{sec:intro}

%%% PDE's theory

Systems of Conservation Laws (SCL) with smooth fluxes $F_a(q) \in \RR^L$, $1 \leq a \leq d$, 
\begin{equation}\label{eq:scl}
\partial_t q^l + \sum\limits_{a=1}^d \partial_a F_a^l(q) = 0  % B^l
\quad 1\leq l\leq L
\end{equation}
that satisfy an additional non-trivial conservation law 
\begin{equation}
\label{eq:additional}
    \partial_t \eta(q) + \sum\limits_{a=1}^d \partial_a G_a(q) = 0
\end{equation}
where $\eta(q)$ is a scalar $C^2(\RR^L)$ functional strictly % at least in some open subset of phase space
convex in % the conservative variable
$q$ % sometimes termed ``mathematical entropy''
and $G_a$ are smooth ``entropy'' fluxes, are widely spread % modelling tools/PDEs especially
in the physical sciences. They have a symmetric-hyperbolic formulation 
so solutions $q \in C^0_t\left([0,T),[H^s(\RR^d)]^L\right)$ are unequivocally % well
defined on % sufficiently 
small times $T$ given initial conditions $q(t=0)=q^0 \in [H^s(\RR^d)]^L$, $s>1+\frac{d}2$ \cite{dafermos-2000,benzonigavage-serre-2007}.
The particular case of \emph{linear} symmetric-hyperbolic systems $F_a^l(q)=\myred{\sum_{m=1}^L}A_a^{lm}q_m$, $A_a^{lm} = A_a^{ml}$, % ($1\le m,l\le L$), 
where $\eta(q)=\tfrac12|q|^2$ and $G_a(q)=\tfrac12\myred{\sum_{m,l=1}^L}A_a^{lm}q_mq_l$, % \frac12
is a minimal benchmark to evaluate % the stability and
the accuracy of numerical approximations to $q(t\equiv x_0,x_1,\dots,x_d)$.
Noticeably, % the Cauchy problem for
solutions are then well-defined whatever $T>0$ and % i.e. it is global-in-time
whatever $s\in\RR$.
The same remarkable properties % straightforwardly
extend to \emph{Friedrichs symmetrizable} linear systems i.e. linear systems \eqref{eq:scl}
for which a symmetric, positive-definite matrix $S_0$ exists so that $S_0A_a = A_a^TS_0$
($S_0^{\frac12}q$ is solution to a symmetric-hyperbolic system then, with $\eta(q)=q^TS_0q/2$).
% \cite[Déf~1.2, p.13]{benzonigavage-serre-2007}.
% For nonlinear systems, singularities necessarily appear after some finite time (in a given coordinate system).
% Beyond 1D, various possible types of singularities exist (cf Alinhac, Sideris, Christodoulou, Speck after John),
% some solutions may remain piecewise smooth across a smooth boundary (Majda, Trakhinin)
% otherwise one still lacks a general uniqueness framework.
% Note that a number of ``singularities'' can be removed simply by changing the coordinate system (Temple).
% In general, it seems unclear when/how physics is still missing for uniqueness.
%
Friedrichs symmetrizable linear systems are physically relevant: here, we concentrate % restrict
on the linear flux case having in mind % applications to
the % numerical
evaluation of discretization methods for linear \emph{elastodynamics}.

%%% end of PDE's theory

\medskip

%%% numerical methods

Many numerical methods have been proposed to compute % numerically, some
solutions $q(t,x_1,\ldots,x_d)$ to % Cauchy problems when
\eqref{eq:scl}. % are complemented by IC
% in the linear flux case naturally (sometimes specifically) and also
% including beyond maximal ``smooth'' existence time (for nonlinear cases)
% We consider methods where time-evolution is discretized using explicit RK (Euler forward) schemes. <===
Various Finite-Difference (FD) \cite{MR2380849} % on Cartesian grids
and Finite-Volume (FV) methods \cite{leveque-2002,Godlewski-Raviart2021}
have proved able to approximate well solutions. % smooth, and even non-smooth sometimes
Yet, as demonstrated e.g. in % the recent monograph 
\cite{hesthaven-2018}, % mostly about the 1D case ($d=1$)
standard techniques leave room to reach accuracy at a reasonable computational price, especially for multidimensional solutions to nonlinear systems. % even well-def smooth case ?
See also \cite{s10915-022-02076-3} for a recent study of % the convergence rate of 
Discontinuous Galerkin (DG) methods 
toward solutions to linear (symmetric-hyperbolic) systems,
or \cite{FVLongTimeBehaviorHyperbolic} for the long-time behaviour of FV schemes (the particular DG-0 case).

%%% end of numerical methods

\smallskip

In this work, we study the construction and numerical performance of % a Boltzmann method that produces so-called
\emph{vector(ial) kinetic schemes} as defined in Section~\ref{sec:scheme},
for % the numerical approximation of
linear symmetric-hyperbolic systems of conservation laws, % first
with a view to founding the development of the kinetic approach to further cases.
%  like elastodynamics
After recalling some elements of analysis % standard (and less standard) 
about vectorial kinetic schemes % for linear symmetric-hyperbolic systems of conservation laws
in Section~\ref{sec:scheme}, 
we develop in Section~\ref{sec:application} \emph{a construction method} 
for the paradigmatic % case of 
(linear) acoustics and elastodynamics systems.
Finally, the construction method is first based on ensuring a discrete equivalent to a convex extension of the SCL at, as well as preserving some (rotational) symmetries of the SCL. Next, it leaves one with various choices for parameters. In Section~\ref{sec:numerical}, we numerically evaluate the performance of specific schemes (with specific parameter values) in comparison with standard FD and FV schemes.

\mygreen{Our numerical results first confirm the interest of ensuring a discrete equivalent to a convex extension,
both for ``first-order'' and ``second-order'' kinetic schemes: ensuring a discrete equivalent to a convex extension through adequate parameter choices appears as a sufficient and necessary condition of stability in our numerical tests. Moreover, both first and second order schemes do have first and second order convergence rates. Last, the remaining parameter choices (within the numerical stability region) show a strong influence on the magnitude of the discretization error. The parameter choice with largest CFL number
(equiv., the one with smallest spurious diffusion in the equivalent equation analysis) has the smallest discretization error, smaller than standard second order FD and FV schemes.}
% notation conventions *** In the sequel, we use Einstein's convention for double indices. *** % ($m$ or $\alpha$).
% We denote $\nabla$ the gradient operator on Euclidean space, and the divergence.

Our numerical results are an incentive to further studies about the convergence % conditions
and the error of kinetic schemes for (symmetric-hyperbolic) SCL

\section{Vectorial kinetic schemes}
\label{sec:scheme}

Kinetic schemes use $Q$ kinetic variables $f_{\zeta}$, $\zeta \in\{0,1\dots,Q-1\}$,
which are solutions to (discretized) Boltzmann-BGK equations with speeds %celerities
$c_\zeta \in\RR^d$ and a relaxation time $\e\ll T$. Here we consider \emph{vectorial} (or vector) kinetic schemes, with vectorial unknown $f_{\zeta}\in\RR^L$ solutions to % $L$
Boltzmann-BGK equations %\cite{bouchut-2003} $\zeta \in\{0,1\dots,Q-1\}$
\begin{equation} \label{eq:Vbgk}
\partial_t f_{\zeta,l} + \sum\limits_{a=1}^d c_\zeta^a  \partial_a f_{\zeta,l}  = \dfrac{\bar f_{\zeta,l}(Mf)- f_{\zeta,l}}{\e}
\end{equation}
for $l\in\{1,\dots,L\}$, using ``Maxwellians'' $\bar f_{\zeta,l}$ functions of moments of $f_{\zeta,l}$ like 
% $M_0f \in \RR^L = \int d\zeta $
\begin{equation}
\label{eq:M0}
M_0f_l := \sum\limits_\zeta % {\zeta=0}^{Q-1}
f_{\zeta,l} \,.
\end{equation}
Here, we % set $c_0=0_{\RR^d}$ by convention, and positive Cartesian norm $|c_\zeta|>0$ for $\zeta>0$,
consider $Q\ge d$ such that the speeds %celerities
$c_\zeta \in\RR^d$ define \emph{lattices} of $\RR^d$ % https://en.wikipedia.org/wiki/Lattice_(group)
% inducing grids/meshes that are _regular_ tilings with a ``primitive cell''
% i.e. tesselations of $\R^d$ into polygons/polyhedra with a symmetry group that acts on vertices, edges and tiles ??
parametrized by a uniform time-step $k=T/N>0$ ($N\in\NN^*$): for all % speed $c_\zeta$,
$\zeta=0\ldots Q-1$ it holds $\boldsymbol{a}_{ij}+kc_\zeta = \boldsymbol{a}_{i'j'}$ for some $i',j'\in\ZZ$.
%
% Then, given one fixed $k$, kinetic variables $f^n$, $n=0\ldots N-1$, can be computed
% independently one another on each lattice of  points $a_{ij}\in\RR^2$ ($i,j\in\ZZ$).
%
Such (vectorial) kinetic schemes are sometimes referred to as (vector) ``Lattice Boltzmann Methods'' (LBMs).

In this work, we consider square lattices % p4m, (*442), [4,4]
equiv. \emph{Cartesian grids} % unequivocally
with $c_0=0$
\begin{equation}
\label{CFLBM}
\Delta=\inf_{{ij}\neq{i'j'}}|\boldsymbol{a}_{ij}-\boldsymbol{a}_{i'j'}|=k\min_{\zeta>0}|c_\zeta|\,, % >0    
\end{equation}
$\boldsymbol{a}_{ij}$ has coordinates $(i\Delta,j\Delta)$.
Denoting $f_{\zeta,l}^{n}(\boldsymbol{a}_{ij})$ the kinetic variables for computing % numerical approximations
$q_{ij}^n\approx q(kn,\boldsymbol{a}_{ij})$, $n=1\ldots N$,
% (Adaptation to more general meshes is possible but requires additional numerical steps, with additional discretization error then, see e.g. \cite{baty-drui-helluy}.)
we are interested in kinetic schemes as e.g. \cite{Coulette2020,baty-drui-helluy},
which read on 2D grids of points $\boldsymbol{a}_{ij}$ with coordinates $(\Delta i,\Delta j)$ % and
\begin{equation}
\label{eq:Vbgknp1}
f_{\zeta,l}^{n+1}(\boldsymbol{a}_{ij}+kc_\zeta) = (1-\omega) f_{\zeta,l}^n(\boldsymbol{a}_{ij}) + \omega \bar f_{\zeta,l}(M_0f^n(\boldsymbol{a}_{ij}))
\end{equation}
where for all $\zeta$ and $ij$,
$\boldsymbol{a}_{i'j'}=\boldsymbol{a}_{ij}+kc_\zeta$ is uniquely defined on the lattice,
while $\omega \in (1,2)$ ranges for ``first-order'' schemes when $\omega=1$ to ``second-order'' schemes when $\omega=2$.
% \omega=\frac{k}{\e+k\theta} $\theta=\frac12,1$) -> 1 or 2 si \e << k... but !
The latter schemes coincide % for instance
with a % the following
two-step % fractional-step time-discretization method
time-integration of \eqref{eq:Vbgk}:
\begin{eqnarray}
\label{projectionStep}
& f_{\zeta,l}^{n+\frac12}(\boldsymbol{a}_{ij}) = (1-\omega) f_{\zeta,l}^n(\boldsymbol{a}_{ij}) + \omega \bar f_{\zeta,l}(M_0f^n(\boldsymbol{a}_{ij}))
\\
\label{transportStep}
& f_{\zeta,l}^{n+1}(\boldsymbol{a}_{ij}+kc_\zeta) = f_{\zeta,l}^{n+\frac12}(\boldsymbol{a}_{ij})
\end{eqnarray}
resulting from a Projection--Transport operator--splitting on $t\in[nk,(n+1)k)$.

The approximation of \eqref{eq:scl} by \eqref{eq:Vbgknp1} is not yet fully understood, even in the case of (smooth) solutions to \emph{linear} SCL, to our knowledge. \mypin{Various \emph{stability conditions} have been proposed in the literature
\cite{bouchut-2003,HELIE_Romane_2023_ED269}, as well as elements of \emph{error analysis} (i.e. heuristics % non-(fully)-rigorous arguments
for a future rigorous proof) \cite{dubois2008equivalent,PhysRevE.93.033310,Bellotti_Graille_Massot_2021,m2an220088},
% like \emph{equivalent equations} satisfied by moments $M_0f^n$, after various possible asymptotic expansions
which presently guide the construction of a kinetic scheme. But we are not aware of sufficient conditions, about % -- here
finitely-many % -- 
kinetic speeds or ``celerities'' $c_\zeta$ and associated % -- here 
vector % -- 
Maxwellians $\bar f_\zeta$, that guarantee a priori the \emph{convergence} of % (interp/reconstruction based on) solutions to 
\eqref{eq:Vbgknp1} to \eqref{eq:scl}. % in some norm -- adequate for some solutions --, at some rate
That is why here, focusing on the most simple SCL, the linear acoustics/elastodynamics systems in particular,
we explore some sensible choices of speeds and Maxwellians \emph{numerically} (see Section~\ref{sec:numerical}).
To that aim, let us first recall what are sensible choices presently.}

\smallskip

First, the (moments $M_0f^n$ of the) solutions to \eqref{eq:Vbgknp1} are known to solve equivalent equations consistent with \eqref{eq:scl} % at various rates
provided the Maxwellians $\bar f$ % \in C^2(\RR^L,\RR^{Q\times L})
satisfy % \mathcal{C}^2
\begin{align}
\label{req1}
M_0 \bar f(q) &=q
\\
\label{req2}
M_a \bar f(q) &= F_{a}(q)
\end{align}
for all $q\in \RR^L$, on denoting $M_0$ the operator defined in \eqref{eq:M0}, and $M_a:\RR^{Q\times L}\to\RR^L$
\begin{equation}
\label{eq:Ma}
M_a f_l := \sum\limits_{\zeta=0}^{Q-1}c_{\zeta}^a f_{\zeta,l}
\end{equation}
see e.g. \cite{m2an220088} for a recent statement of that result.
Thus, \eqref{req1} and \eqref{req2} are standardly required in the construction of kinetic schemes for \eqref{eq:scl}.
Choosing speeds and Maxwellians that satisfy \eqref{req1} and \eqref{req2} is sensible,
it provides one with $(d+1)L$ % independent ??
equations % as soon as c_\zeta\neq1 for all \zeta
for $QL$ unknowns $\bar f$.
% Requirement i) consists in $L$ equations, and requirement ii) consists in $d\times L$ equations.
But \eqref{req1} and \eqref{req2} are not enough in general to define uniquely the Maxwellians $\bar f$
(e.g. when using standard lattices with $Q>d+1$ celerities % *** EXPLAIN MORE !? TBC ***
and linear Maxwellians). 
Moreover, letting alone the fact that \eqref{req1} and \eqref{req2} are not enough to guarantee convergence a priori,
\eqref{req1} and \eqref{req2} do also not obviously provide one with a (simple, pointwise) error analysis % details !!
in the case of smooth solutions -- unlike standard FD and FV schemes as Yee's scheme or the upwind scheme
(see Sections~\ref{sec:numerical}, \ref{sec:FD} and \ref{sec:FV}).
Now, an error analysis % which can be called consistency in the FD sense
is one important step in proving convergence, independent of stability, % of equivalent FD scheme: eg in von Neumann sense
which still needs studying \cite{Bellotti_Graille_Massot_2021}. % in conclusion: announced, not published
So, with a view to complementing \eqref{req1} and \eqref{req2} here, let us now provide some elements for
a detailed error analysis in the case of smooth solutions, inspired by the heuristics \cite[Proposition 2.1]{bouchut-2003} leading to an equivalent equations when $\omega=1$.
\begin{proposition}\label{prop:bouchut}
Given $\{c_\zeta, \zeta=0\dots Q-1\}$ consider % affine ? useful ? !! product -- essentially bounded
Maxwellians % $\bar f \in C^2(\R^L,\R^{Q\times L})$ 
$\bar f_{\zeta}%,l
:\R^L\to\R^L$ \emph{linear}, that satisfy \eqref{req1}--\eqref{req2} and
\mycya{$$ (*) \qquad \sum\limits_{\zeta=0}^Q \max\limits_{\|q\|_\infty=1} |\bar f_{\zeta,l}(q)| % ,l
\le 1 \quad \forall l=1\ldots L\,.$$} 
%  $\Rightarrow |\sum_{\zeta=0}^Q\bar f_{\zeta,l}q_{\zeta}| % ,l
% \le \max_{\zeta} %=0\ldots Q  \Ccal
% |q_{\zeta}| %,l
% \quad \forall q_{\zeta}%,l
% \in\RR$. %  for some $\Ccal\in(0,1]$
\\
Then, for any smooth solution
\begin{equation}
\label{regularity}
q \in C^0_t\left([0,T),[H^2(\RR^d)]^L\right)
\cap C^1_t\left([0,T),[H^1(\RR^d)]^L\right) \cap C^2_t\left([0,T),[H^0(\RR^d)]^L\right) % any $T>0$
\end{equation}
to \emph{linear} conservations laws \eqref{eq:scl} % associated
with initial condition $q(t=0)=q^0 \in [H^2(\RR^d)]^L$, % s>\frac{d}2+3, & T small if nonlinear
% assuming \eqref{req1} and \eqref{req2}, 
there exists $C_T>0$ % independent of discretization parameters $k$
such that it holds $\forall k=T/N$, $N\in\mathbb{N}^*$: % , $k \max_a \|A_a\|_\infty\le1$
\begin{equation}\label{eq:cvrate1}
\|q(T)-M_0f^N\|_0:=\sqrt{\int_{\R^d}d\boldsymbol{a} {\sum_{l=1}^L} |q_l(T,\boldsymbol{a})-M_0 f^N_l(\boldsymbol{a})|^2} \le k C_T % \quad \forall k=T/N\,, N\in\mathbb{N}^*
\end{equation}
when, for each lattice $\{\boldsymbol{a}_{ij}\}$, $f_{\zeta,l}^n(\boldsymbol{a}_{ij})$ is computed by % the scheme
\eqref{eq:Vbgknp1} with $\omega=1$, and
\begin{equation}
\label{initLBM}
f^0 %_{\zeta,l}
= \bar f( q^0 ) \,.
\end{equation}
% \begin{equation}\label{eq:cvrate1lattice}
% \sqrt{\sum_{ij}|q(T,a_{ij})-M_0 f^N(a_{ij})|^2} \le k C \quad \forall k=T/N\,, N\in\mathbb{N}^*
% \end{equation}
Denoting $F^l_a(q)=\sum\limits_{m=1}^L A^{lm}_aq_{m}$ the flux, % in~\eqref{req2} 
the constant $C_T$ can be chosen as follows: % depends precisely on $T$
\begin{equation*}\label{CT}
C_T = 2 T \left( \max_\zeta|c_\zeta| + \max_a\|A_a\|_\infty \right)^2 \max_{a,b} \sup_{s\in(0,T)} \|\partial^2_{ab}q\|_0(s) \,.
\end{equation*}
\end{proposition}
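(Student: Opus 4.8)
The plan is to exploit the special structure at $\omega=1$ to reduce the kinetic iteration to a \emph{linear finite-difference scheme on the single moment} $Q^n:=M_0f^n$, and then run a classical Lax-type (stability $+$ consistency) argument in the discrete $L^2$ norm. First I would observe that, because $\omega=1$ kills the $(1-\omega)f^n$ term and the Maxwellians are linear, the update \eqref{eq:Vbgknp1} reads $f^{n+1}_{\zeta,l}(\boldsymbol{a}_{ij})=\bar f_{\zeta,l}\big(Q^n(\boldsymbol{a}_{ij}-kc_\zeta)\big)$. Summing over $\zeta$ and using \eqref{req1} gives a closed, linear, shift-type recursion $Q^{n+1}(\boldsymbol{a}_{ij})=(SQ^n)(\boldsymbol{a}_{ij}):=\sum_\zeta \bar f_\zeta\big(Q^n(\boldsymbol{a}_{ij}-kc_\zeta)\big)$, while the initialization \eqref{initLBM} together with \eqref{req1} yields $Q^0=q^0$ exactly. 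Thus the proof never needs to track the full kinetic state, only $Q^n$.

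Second, I would perform the consistency (local truncation) analysis by inserting the exact solution into $S$. Writing $t_n:=nk$ and $E^n_l(\boldsymbol{a}_{ij}):=q_l(t_{n+1},\boldsymbol{a}_{ij})-\sum_\zeta\bar f_{\zeta,l}\big(q(t_n,\boldsymbol{a}_{ij}-kc_\zeta)\big)$ and Taylor-expanding in space around $\boldsymbol{a}_{ij}$: the order-$0$ term equals $q_l(t_n,\boldsymbol{a}_{ij})$ by \eqref{req1}, and the order-$1$ term is $-k\sum_a M_a\bar f(\partial_a q)=-k\sum_a\partial_a F^l_a(q)$ by \eqref{req2} and linearity; the time side contributes $q_l(t_n)+k\partial_t q_l(t_n)+\dots$. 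The equation \eqref{eq:scl} then cancels both the order-$0$ and order-$1$ contributions, leaving only second-order remainders: an integral-form spatial remainder weighted by $c^a_\zeta c^b_\zeta$ and pushed through the (linear) $\bar f_\zeta$, and a temporal remainder $\tfrac{k^2}2\partial_t^2 q$ rewritten through the equation as $\partial_t^2 q=\sum_{a,b}A_aA_b\,\partial^2_{ab}q$. Bounding these in $\|\cdot\|_0$, controlling the velocity factors by $\max_\zeta|c_\zeta|$, the matrix factors by $\max_a\|A_a\|_\infty$, and the kinetic coefficients by $(*)$, produces $\|E^n\|_0\le k^2\,(\max_\zeta|c_\zeta|+\max_a\|A_a\|_\infty)^2\max_{a,b}\sup_s\|\partial^2_{ab}q\|_0$ (up to the factor~$2$), i.e.\ exactly the per-step budget matching $C_T$.

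Third comes the \emph{stability} step, which is where I expect the main difficulty. Since each $kc_\zeta$ is a lattice vector, every spatial shift is an isometry of the discrete $L^2$ norm, so it suffices to show that $S$ is non-expansive, $\|Sv\|_0\le\|v\|_0$. Writing $\bar f_{\zeta,l}(q)=\sum_m M^{\zeta,lm}q_m$, a weighted Cauchy–Schwarz gives $|(Sv)_l(\boldsymbol{a}_{ij})|^2\le\big(\sum_{\zeta,m}|M^{\zeta,lm}|\big)\big(\sum_{\zeta,m}|M^{\zeta,lm}|\,|v_m(\boldsymbol{a}_{ij}-kc_\zeta)|^2\big)$, where the first factor is $\le1$ precisely by $(*)$. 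Summing over $ij$ (shift-invariance) and over $l$ yields $\|Sv\|_0^2\le\sum_m\big(\sum_{\zeta,l}|M^{\zeta,lm}|\big)\|v_m\|_0^2$. The subtlety is that $(*)$ bounds the \emph{row} sums $\sum_{\zeta,m}|M^{\zeta,lm}|$, whereas the estimate calls for the \emph{column} sums $\sum_{\zeta,l}|M^{\zeta,lm}|$; reconciling the two — using the symmetry and structure of the Maxwellians inherited from \eqref{req1}–\eqref{req2} — is the step I would treat most carefully to secure the $\ell^2\!\to\!\ell^2$ contraction.

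Finally, I would assemble the estimate by Lax's argument. Subtracting the exact-solution identity from the scheme and using linearity, the error $e^n:=Q^n-q(t_n,\cdot)$ obeys $e^{n+1}=Se^n-E^n$ with $e^0=0$. Non-expansiveness of $S$ gives $\|e^{n+1}\|_0\le\|e^n\|_0+\|E^n\|_0$, so telescoping over the $N=T/k$ steps yields $\|e^N\|_0\le\sum_{n=0}^{N-1}\|E^n\|_0\le N\,k^2C=T\,k\,C$, which is \eqref{eq:cvrate1} with the announced $C_T$. The two ingredients that must be aligned for the constant to come out exactly as stated are (i) the combined space/time second-order remainder bound of the consistency step and (ii) the discrete interpretation of $\|\cdot\|_0$ (nodal values with cell-volume weights, so that shifts are isometries and the node-to-node comparison is legitimate).
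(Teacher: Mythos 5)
Your route is genuinely different from the paper's, and in one respect structurally cleaner. The paper does not run a Lax (consistency-plus-stability) argument on the moment operator $S$: it Taylor-expands \emph{both} the numerical solution (via the exact transport interpretation, \eqref{transportStepTaylorM0}) and the exact solution \eqref{sclTaylor}, subtracts, and is then left with the unbounded propagator $I-k\sum_a A_a\partial_a$ acting on the error. This forces it to control the gradient errors $\|\partial_a q(nk,\cdot)-\partial_a M_0 f^n\|_0$ through the separate recursion \eqref{eq:diffTaylor1}, and to use $(*)$ to bound second derivatives of the \emph{numerical} solution appearing in the remainders. Your truncation error $E^n$ involves only derivatives of the exact solution $q$ (so $H^2$ regularity of $q$ suffices, exactly as in the statement), and the error propagation needs exactly one ingredient: non-expansiveness (or at least uniform power-boundedness) of $S$ on $L^2$. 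Note, moreover, that \eqref{eq:diffTaylor1} only gives gradient errors growing like $nk\,C$, and their accumulation in \eqref{eq:diffTaylor2}, $\sum_{n} k\cdot nk\,C$, is of order $T^2$, \emph{not} $O(k)$; the passage to \eqref{eq:TaylorBound} glosses over this term, so your telescoping $\|e^N\|_0\le\sum_n\|E^n\|_0\le TkC$ is arguably the more robust way to the stated rate and constant.

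The gap you flag in the stability step (row sums versus column sums) is real as you pose it, but it closes for a reason different from the one you suggest: \eqref{req1}--\eqref{req2} impose no symmetry on the matrices $M^{\zeta}$, so do not look for the contraction there. Instead, $(*)$ \emph{combined with} \eqref{req1} degenerates the Maxwellians. Indeed $(*)$ reads $\sum_\zeta\sum_m |M^{\zeta,lm}|\le 1$ for each $l$, while \eqref{req1} gives $\sum_\zeta M^{\zeta,ll}=1$, whence
\[
1=\Bigl|\sum_\zeta M^{\zeta,ll}\Bigr|\le\sum_\zeta|M^{\zeta,ll}|\le\sum_\zeta\sum_m|M^{\zeta,lm}|\le 1\,,
\]
so all inequalities are equalities: every off-diagonal entry vanishes and the diagonal entries are nonnegative with $\sum_\zeta M^{\zeta,ll}=1$. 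Thus $S$ acts componentwise as a convex combination of translations (each an $L^2(\R^d)$ isometry) and is an $L^2$ contraction; rows and columns coincide trivially. This degeneracy is precisely why the paper remarks, right after its proof, that $(*)$ is ``too strong'' (with diagonal Maxwellians, \eqref{req2} forces the $A_a$ themselves to be diagonal, i.e.\ decoupled scalar transport). If you prefer an argument that does not exploit the degeneracy, Minkowski's inequality plus translation invariance gives, from $(*)$ alone, non-expansiveness of $S$ in the mixed norm $\max_l\|v_l\|_{L^2(\R^d)}$, hence uniform power-boundedness on $[L^2(\R^d)]^L$ with constant $\sqrt L$ --- enough for \eqref{eq:cvrate1} at the price of a factor $\sqrt L$ in $C_T$; this mixed-norm estimate is also what the paper's claim $\max_{a,b}\|\partial^2_{ab}M_0f^n\|_0\le\max_{a,b}\|\partial^2_{ab}q^0\|_0$ implicitly relies on.
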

\begin{proof}
First, % note that
for $\omega=1$ the ``transport-projection'' scheme \eqref{eq:Vbgknp1} rewrites
\begin{equation}
\label{eq:Vbgknp1bis}
f_{\zeta,l}^{n+1}(\boldsymbol{a}) = \bar f_{\zeta,l}(M_0\{f^n_{\zeta}(\boldsymbol{a}-kc_\zeta)\}) % \quad l=1\ldots L
\end{equation}
whatever $k>0$.
Then, since $M_0f^0=q^0\in H^2(\RR^d)$, one can establish by recurrence for $n=1\ldots N$
that $M_0f^n = \sum_\zeta \bar f_{\zeta}(M_0\{f^{n-1}_{\zeta}(\cdot-kc_\zeta)\})\in H^2(\RR^d)$,
and $\max_{a,b}\|\partial^2_{ab}M_0 f^n\|_0 \le % \Ccal^n 
\max_{a,b}\|\partial^2_{ab}q^0\|_0$
by assumptions on $\bar f_{\zeta}$. % linear ``bounded'' operators on fnite-dim space $\RR^L$

Moreover, $f_{\zeta,l}^{n+1}(\boldsymbol{a}')$ % = f_{\zeta,l}^{n+\frac12}(a-kc_\zeta))
in  % that two-step ``transport-projection'' scheme \eqref{eq:Vbgknp1bis}, the transport step
\eqref{transportStep} is % can be computed as
equivalently % the value
$f_{\zeta,l}(t^{n+1},\boldsymbol{a}')$ % of
the solution to
\begin{equation}
\label{transportStepEDP}
(\partial_t + c_\zeta\cdot\nabla)f_{\zeta,l} = 0 \;\ \forall t\in(t^n,t^{n+1}), \boldsymbol{a}\in\R^d
\quad \,; f_{\zeta,l}(t^n,\boldsymbol{a})=f_{\zeta,l}^{n+\frac12}(\boldsymbol{a})
\end{equation}
which possesses the regularity~\eqref{regularity} because $f_{\zeta,l}^{n+\frac12} = \bar f_{\zeta,l}(M_0f^n)\in H^2(\RR^d)$.
Then, using \eqref{transportStepEDP} to Taylor expand $f_{\zeta,l}^{n+1}(\boldsymbol{a}) \equiv f_{\zeta,l}(t^n+k,\boldsymbol{a})$ %t\in [t^n,t^{n+1}]
implies whatever $\boldsymbol{a}\in\R^d$
\begin{multline}
\label{transportStepTaylor}
f_{\zeta,l}^{n+1}(\boldsymbol{a}) = f_{\zeta,l}^{n+\frac12}(\boldsymbol{a}) % f_{\zeta,l}(t^n, \boldsymbol{a})
- k (c_\zeta\cdot\nabla)f_{\zeta,l}^{n+\frac12}(\boldsymbol{a})
% \\
+ \int_{t^n}^{t^{n+1}}(t^{n+1}-s)\partial^2_{tt}f_{\zeta,l}(s, \boldsymbol{a})ds
%
% + \frac{k^2}2 (c_\zeta\cdot\nabla)(c_\zeta\cdot\nabla)f_{\zeta,l}^{n+\frac12}(a)
% + \int_{t^n}^{t^{n+1}}(t^{n+1}-s)^2\partial^3_{ttt}f_{\zeta,l}(s, \boldsymbol{a})ds % / 2 ??
\end{multline}
thus, using \eqref{req1}--\eqref{req2} 
and recalling the projection step $f_{\zeta,l}^{n+\frac12}(\boldsymbol{a})=\bar f_{\zeta,l}\left(M_0f^n(\boldsymbol{a})\right)$,
\begin{multline}
\label{transportStepTaylorM0}
M_0 f_l^{n+1}(\boldsymbol{a}) = M_0 f_l^n(\boldsymbol{a}) % f_{\zeta,l}(t^n, \boldsymbol{a})
- k \myred{\sum_{a=1}^d} \partial_a F_a^l\left( M_0 f^n(\boldsymbol{a}) \right)
\\
+ \int_{t^n}^{t^{n+1}}(t^{n+1}-s)\partial^2_{tt} M_0 f_l(s, \boldsymbol{a})ds
% \\
% + \frac{k^2}2 \partial_a \partial_\beta M_{\alpha\beta} f_l^n(\boldsymbol{a})
% + \int_{t^n}^{t^{n+1}}\frac12(t^{n+1}-s)^2\partial^3_{ttt} M_0 f_l(s, \boldsymbol{a})ds
\,.
\end{multline}
% where we denote $M_{\alpha\beta} f_l := % \sum\limits_{\zeta=0}^{Q-1}
% c_\zeta^\alpha c_\zeta^\beta f_{\zeta,l}$ following \eqref{eq:Ma}. 
Eq.~\eqref{transportStepTaylorM0} can be compared with a Taylor expansion of $q(nk+k,\boldsymbol{a})$ % $q(t=0)\in [H^s(\RR^d)]^L$--$s>1$,%2+\frac{d}2
using \eqref{eq:scl} i.e.
\begin{multline}
\label{sclTaylor}
q_l((n+1)k,\boldsymbol{a}) = q_l(nk,\boldsymbol{a}) - k \myred{\sum_{a=1}^d} \partial_a F_a^l\left(q(nk,\boldsymbol{a})\right)
\\
+ \int_{t^n}^{t^{n+1}}(t^{n+1}-s)\partial^2_{tt} q_l(s, \boldsymbol{a})ds
% \\
% + \frac{k^2}2\partial_a\left(\partial_{m}f^l_\alpha\partial_{m'}f^{m}_\beta\partial_\beta q_{m'}\right)(nk,\boldsymbol{a})
% + \int_{t^n}^{t^{n+1}}\frac12(t^{n+1}-s)^2\partial^3_{ttt} q_l(s, \boldsymbol{a})ds
\end{multline}
where % $\partial_{m}f^l_\alpha\partial_{m'}f^{m}_\beta=A^{lm}_\alpha A^{mm'}_\beta$, so
$\partial^2_{tt} q_l = \myred{\sum_{a,b=1}^d \sum_{m,m'=1}^L} A^{lm}_a A^{mm'}_b \partial^2_{ab} q_{m'}$
% $\partial_a\left(\partial_{m}F^l_a\partial_{m'}F^{m}_b\partial_b q_{m'}\right)$
% i.e. $A^{lm}_a A^{mm'}_b \partial^2_{ab} q_{m'}$ (recall $\partial_{m}F^l_a=A^{lm}_a$)
here in the linear flux case.

Subtracting \eqref{sclTaylor} from \eqref{transportStepTaylorM0}, moreover using \eqref{transportStepEDP} we obtain
\begin{multline}
\label{eq:diffTaylor} 
q_l((n+1)k,\boldsymbol{a}) - M_0 f_l^{n+1}(\boldsymbol{a})
\\
= q_l(nk,\boldsymbol{a}) - M_0 f_l^n(\boldsymbol{a})
- k \myred{\sum_{a=1}^d \sum_{m=1}^L}
\partial_{m}F^l_a \left( \partial_a q_{m}(nk,\boldsymbol{a}) - \partial_a M_0 f_{m}^n(\boldsymbol{a}) \right) + R^n_l(\boldsymbol{a})
\end{multline}
with a remainder $R^n_l(\boldsymbol{a})$ that satisfies by Minkowski's inequality
% \begin{multline}
% \label{Rna}
% \|R^n_l\|_0 % :=\sqrt{\int_{\R^d}da |R^n(\boldsymbol{a})|^2} \\
% \le k^2 \max_{a,b} \Big( \max_\zeta|c_\zeta|^2 \|\partial^2_{ab}M_0 f_l^n\|_0
% + \max_a\|A_a\|_\infty^2 \sup_{s\in(t^n,t^{n+1})} \|\partial^2_{ab}q_l\|_0(s) \Big)
% \end{multline} for all $l=1,\ldots,L$.
\begin{multline}
\label{Rna}
\|R^n\|_0 % :=\sqrt{\int_{\R^d}da |R^n(\boldsymbol{a})|^2} \\
\le k^2 \max_{a,b} \Big( \max_\zeta|c_\zeta|^2 \|\partial^2_{ab}M_0 f^n\|_0
+ \max_a\|A_a\|_\infty^2 \sup_{s\in(t^n,t^{n+1})} \|\partial^2_{ab}q\|_0(s) \Big) \,.
\end{multline}
Using also Minkowski's inequality with \eqref{eq:diffTaylor} yields
% \begin{multline}
% \label{eq:diffTaylor2} 
% \|q_l((n+1)k,\cdot) - M_0 f_l^{n+1}(\cdot)\|_0 % \sqrt{ \sum_{ij} |q_l((n+1)k,\boldsymbol{a}_{ij}) - M_0 f_l^{n+1}(\boldsymbol{a}_{ij})|^2 }
% \le 
% \|q_l(nk,\cdot) - M_0 f_l^{n}(\cdot)\|_0 % \sqrt{ \sum_{ij} |q_l(nk,\boldsymbol{a}_{ij}) - M_0 f_l^n(\boldsymbol{a}_{ij})|^2 }
% \\
% + k \myred{\sum_{a=1}^d \sum_{m=1}^L}
% |\partial_{m}F^l_a| \|\partial_a q_{m}(nk,\cdot) - \partial_a M_0 f_{m}^{n}(\cdot)\|_0
% + \|R^n_l\|_0. % \sqrt{ \sum_{ij} |R_{ij}|^2 } \,. % \sqrt{ \sum_{ij}|\partial_a q_l(nk,\boldsymbol{a}_{ij})-\partial_a M_0 f_l^n(\boldsymbol{a}_{ij}) |^2 }
% \end{multline}
\begin{multline}
\label{eq:diffTaylor2} 
\|q((n+1)k,\cdot) - M_0 f^{n+1}(\cdot)\|_0 \le \|q(nk,\cdot) - M_0 f^{n}(\cdot)\|_0 
\\
+ k \max_{a,l,m} |\partial_{m}F^l_a| \myred{\sum_{a=1}^d} \|\partial_a q(nk,\cdot) - \partial_a M_0 f^{n}(\cdot)\|_0 + \|R^n\|_0. 
\end{multline}
On the other hand, the following Taylor expansions
\begin{eqnarray}
\label{transportStep0TaylorM0}
M_0 f_l^{n+1}(\boldsymbol{a}) & = M_0 f_l^n(\boldsymbol{a}) + \int_{t^n}^{t^{n+1}}\partial_{t} M_0 f_l(s, \boldsymbol{a})ds
\\ 
\label{scl0Taylor}
q_l((n+1)k,\boldsymbol{a}) & = q_l(nk,\boldsymbol{a}) + \int_{t^n}^{t^{n+1}}\partial_{t} q_l(s, \boldsymbol{a})ds
\end{eqnarray}
yield, using differentiation % derivation ? of \eqref{transportStep0TaylorM0} and \eqref{scl0Taylor}
by $\partial_a$, subtraction and Minkowski's inequality
\begin{multline}
\label{eq:diffTaylor1} 
\|\partial_a q(nk,\cdot) - \partial_a M_0 f^n(\cdot)\|_0 \le \|\partial_a q((n-1)k,\cdot) - \partial_a M_0 f^{n-1}(\cdot)\|_0
\\ + k \max_b \left( \max_\zeta|c_\zeta| \|\partial^2_{ab}M_0 f^{n-1}\|_0
+ \max_\alpha\|A_\alpha\|_\infty \sup_{s\in(t^{n-1},t^n)} \|\partial^2_{ab}q\|_0(s) \right) \,.
\end{multline}
% \begin{multline}
% \label{eq:diffTaylor1} 
% \|\partial_a q_l(nk,\cdot) - \partial_a M_0 f_l^n(\cdot)\|_0
% % \sqrt{ \sum_{ij} |\partial_a q_l((n+1)k,\boldsymbol{a}_{ij}) -\partial_a M_0 f_l^{n+1}(\boldsymbol{a}_{ij}) |^2 }
% \le 
% \|\partial_a q_l((n-1)k,\cdot) - \partial_a M_0 f_l^{n-1}(\cdot)\|_0
% % \sqrt{ \sum_{ij} |\partial_a q_l(nk,\boldsymbol{a}_{ij}) - \partial_a M_0 f_l^n(\boldsymbol{a}_{ij})|^2 }
% \\
% + k \max_b \left( \max_\zeta|c_\zeta| \|\partial^2_{ab}M_0 f_l^{n-1}\|_0
% + \max_\alpha\|A_\alpha\|_\infty \sup_{s\in(t^{n-1},t^n)} \|\partial^2_{ab}q_l\|_0(s) \right) \,.
% \end{multline}
Using \eqref{eq:diffTaylor1} in \eqref{eq:diffTaylor2} and iterating for $n=N-1\ldots 0$ yields
% , when $k \max_a \|A_a\|_\infty \le 1$, % not to amplify the remainder in the gradient term ??
\begin{multline}
\label{eq:TaylorBound} 
\|q(T,\cdot) - M_0 f^N(\cdot)\|_0 \le \|q(0,\cdot) - M_0 f^0(\cdot)\|_0
\\
+ (kN\equiv T) \max_a\|A_a\|_\infty \myred{\sum_{a=1}^d} \|\partial_a q(0,\cdot) - \partial_a M_0 f^0(\cdot)\|_0 + C_T k 
\end{multline}
% \begin{multline}
% \label{eq:TaylorBound} 
% \|q_l(T,\cdot) - M_0 f_l^N(\cdot)\|_0
% \le 
% \|q_l(0,\cdot) - M_0 f_l^0(\cdot)\|_0
% \\
% + (kN\equiv T) \max_a\|A_a\|_\infty \max_{a,m} \|\partial_a q_{m}(0,\cdot) - \partial_a M_0 f_{m}^0(\cdot)\|_0
% % \sqrt{ \sum_{ij} |\partial_a q_l(0,\boldsymbol{a}_{ij}) - \partial_a M_0 f_l^0(\boldsymbol{a}_{ij})|^2 }
% + C_T k % T \left( \max_\zeta|c_\zeta| \sup_{s\in(0,T)} \|\partial^2_{\alpha\beta}M_0 f_l^n\|_0(s) + \max_a\|A_a\|_\infty \sup_{s\in(0,T)} \|\partial^2_{\alpha\beta}q_l\|_0(s) \right) ++++
% \end{multline}
that is \eqref{eq:cvrate1}, since \eqref{initLBM} and \eqref{req1} imply $M_0f^0=q^0$.
\end{proof}

\mycya{The proof above shows the consistency of kinetic schemes under the assumption $(*)$ to ensure the ``luxurious'' $H^s$ stability ($s=2$) of the schemes (namely $\max_{a,b}\|\partial^2_{ab}M_0 f^n\|_0 \le % \Ccal^n
\max_{a,b}\|\partial^2_{ab}q^0\|_0$).
% (linear) transport step \eqref{transportStep} in \eqref{eq:diffTaylor}, plus projection \eqref{projectionStep}
That condition $(*)$ %, which ensures $H^s$ stability in Prop.~\ref{prop:bouchut} ($s=2$),
is \emph{too strong}: combined with~\eqref{req1} it leads to $\bar f_{\zeta,l}(q)=\sum_{m}\Omega_{\zeta,m}^l q_m$
with $\sum_{\zeta}\sum_{m}|\Omega_{\zeta,m}^l| \in(0,1)$ for all $l$ which strongly limits the fluxes captured by~\eqref{req2}. 
That is why we do not claim here to have proved any (interesting) convergence result about kinetic scheme
and we do not proceed to a (cumbersome) extension of Prop.~\ref{prop:bouchut} to the case $\omega=2$.}
% \footnote{ %%%%%%%%%%%%%%%%%%%%%%%%%% technically more involved
% Our proof can be adapted for \emph{smooth} solutions to nonlinear systems on adding in RHS of \eqref{eq:diffTaylor} below
% $- k \left( \partial_{m}f^l_a \left(q_l(nk,\boldsymbol{a})\right) -  \partial_{m}f^l_a \left(M_0 f_l^n(\boldsymbol{a})\right) \right)
% \partial_a q_l(nk,\boldsymbol{a})$, a term which could be handled under e.g. the assumption of Lipschitz fluxes.
% Then, % *** assuming smooth Maxwellians that preserve $H^2$ bound ***,
% one would finally obtain a similar conclusion as \eqref{eq:cvrate1} i.e. convergence at a rate $k$.
% } %%%%%%%%%%%%%%%%%%%%%%%%%%%%%%%%%%%%
%
However, the analysis above can guide one further in the construction of kinetic schemes.
Indeed % our error analyis is not optimal.
note that the approximation of \eqref{scl0Taylor} by \eqref{transportStep0TaylorM0} is in fact one order higher than \eqref{eq:diffTaylor1} (one can Taylor expand one order higher % note regularity !!
 \eqref{scl0Taylor} and \eqref{transportStep0TaylorM0}, next get further cancellation using \eqref{req2}).
Thus, a higher-order approximation than the first-order rate \eqref{eq:cvrate1}
can be reached % on assuming a smoother solution
when $\omega=1$: it suffices that the $\frac{d(d+1)}2$ conditions % on the second-order moments
\begin{equation}\label{req3}
M_{ab} \bar f_l(q) := \sum\limits_{\zeta=0}^{Q-1} c_\zeta^a c_\zeta^b \bar f_{\zeta,l}(q)
= \myred{\sum_{m,m'=1}^L} A^{lm}_a A^{mm'}_b q_{m'} \quad \forall a,b=1\ldots d
\end{equation}
be satisfied so as to cancel the first remaining term in the difference of the % high-order 
Taylor expansions \eqref{sclTaylor} and \eqref{transportStepTaylorM0}. 
% the leading error term is $$ \int_{t^n}^{t^{n+1}}(t^{n+1}-s)\partial^2_{\alpha\beta} \left(M_{\alpha\beta}f^l-  A^{lm}_a A^{mm'}_\beta q_{m'}\right)(s,\boldsymbol{a}) ds $$.
Then one may want to require the $\frac{d(d+1)}2$ conditions in the construction of convergent kinetic schemes.
In fact, satisfying the $\frac{d(d+1)}2$ conditions is impossible in general:
see e.g. the next Section~\ref{sec:application} where a practical application case is investigated.
From the analysis of the main error term, a sensible choice to construct a kinetic scheme is then: minimize
\begin{equation}\label{req3bis}
B_{ab,l}(q):= M_{ab} \bar f_l(q) - \myred{\sum_{m,m'=1}^L} A^{lm}_a A^{mm'}_b q_{m'}
\end{equation}
in some norm on $\RR^{\frac{d(d+1)}2}$. \myblu{Such a requirement for the construction of kinetic schemes seems new to us in the literature, although the matrix $B(q)\in\R^{d\times L,d\times L}$ has already appeared (for instance, in \cite[Prop.~1]{aregbadriollet-natalini-2000}, % (2.8)
the positivity of $B(q)$ is shown a sufficient condition for the diffusivity of the equation formally satisfied by the approximation $M_0f$ of $q$).} % following a so-called Chapman-Enskog expansion
In Section~\ref{sec:numerical}, we next investigate numerically the interest of such a % construction 
requirement.

\smallskip

On the other hand, to construct kinetic schemes, one would also like to ensure % require
sufficient stability conditions for the scheme \eqref{eq:Vbgknp1} to converge.
Now, other stability conditions less strong than $(*)$
can be found in the literature e.g. \cite{bouchut-1999,bouchut-2003,bouchut-2004,Dubois2013,Dubois-ijmpc2014}
which have already proved useful numerically in the case of \emph{nonlinear fluxes} % more difficult
(even though rigorous convergence has not yet been proved to our knowledge),
and which turn out compatible with \eqref{req1} and \eqref{req2}. Precisely,
denoting $\eta^*(\varphi):=\sup_q(\varphi^Tq-\eta(q))$ the Legendre transform of $\eta$, % or Fenchel
a $C^2(\RR^l)$ strictly convex functional of the % so-called
\emph{entropy variable} % well-defined for _coercive entropies_
$\varphi^l(q) :=\partial_{q^l}\eta(q) %\dfrac{\partial\eta}{\partial q^l}
$ % dual to the conservative variable of \eqref{eq:scl}
% that reads $\eta^*(\varphi)=\tfrac12|\varphi|^2$ when $\eta(q)=\tfrac12|q|^2$ and
% $\eta^*(\varphi)=\tfrac12\varphi^TS_0^{-1}\varphi$ when $\eta(q)=\tfrac12q^TS_0q$ % linear fluxes
while $\partial_{q^l}\eta$ is a bijection onto $\RR^L$,
% We recall that the (vector) functionals of symmetric-hyperbolic systems \eqref{eq:scl} write, for $\alpha=1\dots d$, $l=1\dots L$
% \[
% f^l_a = \partial_{\varphi^l} g_a^*
% \]
% see e.g. \cite[Th.~5.2, Chapter I, p.42]{Godlewski-Raviart2021}.
a % the following 
``H-theorem'' can % be found to 
further constrain $\bar f$ \cite{bouchut-1999,bouchut-2003}:
% (a $QL$-dimensional operator constrained only by $(d+1)L$ independent equations in the linear flux case, $Q\ge(d+1)$),
% the seminal works \cite{bouchut-1999,bouchut-2003,bouchut-2004} ($d=1$) and \cite{Dubois2013,Dubois-ijmpc2014} ($d\ge1$)
\begin{proposition}
\label{prop:continuous}
Assume there exist $C^1$ % differentiable % smooth % regular
convex functionals $h^*_\zeta$ of % the entropy variable 
$\varphi$ such that
\begin{equation}
\label{decomposition}
    \sum\limits_{\zeta=0}^{Q-1} h^*_\zeta(\varphi) = \eta^*(\varphi)
    \quad
    \sum\limits_{\zeta=0}^{Q-1} c^a_\zeta h^*_\zeta(\varphi) = G_a^*(\varphi)
\end{equation}
hold, using % "the Legendre-transform" % of the entropy flux
$
G_a^*(\varphi) := \varphi^l F^l_a(q(\varphi)) - G_a(q(\varphi)) \,. 
$
Then \eqref{req1}--\eqref{req2} hold on requiring
\begin{equation}
\label{maxwellian}
 \bar f_{\zeta,l}(q) = \dfrac{\partial h^*_\zeta}{\partial\varphi^l}\left(\varphi(q)\right) \,,
\end{equation}
as well as, on denoting % H-theorem for 
$h_\zeta(f_\zeta) = \sup_{\varphi}\left(\varphi^l f_{\zeta,l}- h^*_\zeta(\varphi)\right)$ % where f=f_{\zeta,l}
for solutions to \eqref{eq:Vbgk}:
\begin{equation}
\label{Htheorem}
\partial_t \left(\sum\limits_{\zeta=0}^{Q-1} h_\zeta(f)\right)
+ \myred{\sum_{a=1}^d}  \partial_a \left(\sum\limits_{\zeta=0}^{Q-1} c^a_\zeta h_\zeta(f)\right) \le 0 \,.
\end{equation}
\end{proposition}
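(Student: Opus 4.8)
The plan is to treat the two assertions in turn: the moment identities \eqref{req1}--\eqref{req2}, which follow purely from Legendre duality together with the entropy--flux compatibility of the pair $(\eta,G_a)$; and then the kinetic $H$-theorem \eqref{Htheorem}, which rests on the convexity of each $h_\zeta$ and on the fact that all Maxwellians $\bar f_\zeta$ share one common dual variable. For the moment conditions I would differentiate the two relations \eqref{decomposition} with respect to $\varphi^l$. The first gives $\sum_\zeta\partial_{\varphi^l}h^*_\zeta(\varphi)=\partial_{\varphi^l}\eta^*(\varphi)$; since $\eta^*$ is the Legendre transform of $\eta$ and $\varphi\mapsto q$ inverts $q\mapsto\partial_q\eta$, duality yields $\partial_{\varphi^l}\eta^*(\varphi)=q_l(\varphi)$, so that \eqref{maxwellian} reads off $M_0\bar f(q)=\sum_\zeta\bar f_{\zeta,l}=q_l$, i.e. \eqref{req1}. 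For \eqref{req2}, differentiating the second relation gives $\sum_\zeta c_\zeta^a\partial_{\varphi^l}h^*_\zeta(\varphi)=\partial_{\varphi^l}G^*_a(\varphi)$, and the only real computation is $\partial_{\varphi^l}G^*_a$. Using $G^*_a(\varphi)=\varphi^m F^m_a(q(\varphi))-G_a(q(\varphi))$ and the chain rule, the terms carrying $\partial_{\varphi^l}q$ combine into $\big(\varphi^m\partial_{q^k}F^m_a-\partial_{q^k}G_a\big)\partial_{\varphi^l}q^k$, which vanishes by the defining entropy--flux compatibility $\partial_{q^k}G_a=\varphi^m\partial_{q^k}F^m_a$ of the pair in \eqref{eq:additional}. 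What survives is exactly $F^l_a(q(\varphi))$, giving $M_a\bar f=F_a$, i.e. \eqref{req2}.

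For the $H$-theorem, let $\psi_\zeta^l:=\partial_{f_{\zeta,l}}h_\zeta(f_\zeta)$ be the variable dual to $f_\zeta$; by Legendre duality it is the maximiser in the definition of $h_\zeta$. Multiplying the BGK equation \eqref{eq:Vbgk} by $\psi_\zeta^l$, summing over $l$ and applying the chain rule turns the transport part into $\partial_t h_\zeta(f_\zeta)+\sum_a c_\zeta^a\partial_a h_\zeta(f_\zeta)$, so that summing over $\zeta$ identifies the left-hand side of \eqref{Htheorem} with the collision contribution $\varepsilon^{-1}\sum_\zeta\psi_\zeta^l\big(\bar f_{\zeta,l}(M_0f)-f_{\zeta,l}\big)$. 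Everything then reduces to showing this source is nonpositive.

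The crux is the sign of that source, and here the structure of \eqref{maxwellian} is decisive: because $\bar f_{\zeta,l}=\partial_{\varphi^l}h^*_\zeta(\Phi)$ with the \emph{same} $\Phi:=\varphi(M_0f)$ for every $\zeta$, Legendre duality gives $\partial_{f_{\zeta,l}}h_\zeta(\bar f_\zeta)=\Phi^l$ for all $\zeta$. I would then split
\begin{equation*}
\sum_\zeta\psi_\zeta^l(\bar f_{\zeta,l}-f_{\zeta,l})
=\sum_\zeta(\psi_\zeta^l-\Phi^l)(\bar f_{\zeta,l}-f_{\zeta,l})
+\Phi^l\sum_\zeta(\bar f_{\zeta,l}-f_{\zeta,l}).
\end{equation*}
The last sum vanishes because \eqref{req1} forces $\sum_\zeta\bar f_{\zeta,l}(M_0f)=(M_0f)_l=\sum_\zeta f_{\zeta,l}$, so $\sum_\zeta(\bar f_{\zeta,l}-f_{\zeta,l})=0$. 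Each remaining term is $\big(\nabla h_\zeta(f_\zeta)-\nabla h_\zeta(\bar f_\zeta)\big)\cdot(\bar f_\zeta-f_\zeta)\le 0$ by monotonicity of the gradient of the convex function $h_\zeta$. Dividing by $\varepsilon>0$ then yields \eqref{Htheorem}.

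I expect the only genuine difficulty to be bookkeeping rather than conceptual: verifying that $h^*_\zeta$ being $C^1$ and convex indeed makes $h_\zeta$ convex with the inverse-gradient duality relations used above, and that the supremum defining $h_\zeta$ is attained along the solution so that $\psi_\zeta$ is well defined (regularity of $f$ together with the strict convexity of $\eta^*$ inherited by the $h^*_\zeta$ should suffice, modulo possible degeneracies of the Legendre pair). The conceptual content --- the conservation $\sum_\zeta(\bar f_\zeta-f_\zeta)=0$ combined with gradient monotonicity --- is precisely the Bouchut--Perthame kinetic-entropy mechanism and requires no new idea.
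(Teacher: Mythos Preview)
Your proof is correct and follows essentially the same route as the paper's: the paper says the proof is ``straightforward by direct computations'' and singles out precisely your key point, namely that $\partial_{f_{\zeta,l}}h_\zeta(\bar f_\zeta(q))=\varphi^l(q)$ is the \emph{same} dual variable for every $\zeta$, which is exactly your $\Phi$. Your explicit differentiation of \eqref{decomposition} for \eqref{req1}--\eqref{req2} and your splitting of the collision term via the conservation $\sum_\zeta(\bar f_\zeta-f_\zeta)=0$ plus gradient monotonicity are the direct computations the paper alludes to.
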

\noindent The proof of Prop.~\ref{prop:continuous} is straightforward by direct computations, follow e.g. \cite{Dubois-ijmpc2014} (in the present smooth case). The key point in \eqref{Htheorem} is that whatever $\zeta$, for all $l$ it holds
$\partial_{f_{\zeta,l}}h_\zeta(f)=\varphi^l$ for all $\zeta$, with $\varphi$ such that $f_{\zeta,l}=\partial_{\varphi_l}h^*_\zeta(\varphi)$,
in particular $\partial_{f_{\zeta,l}}h_\zeta(\bar f(q))=\varphi^l(q)\equiv\partial_{q^l}\eta(q)$ whatever $q$.

The importance of requiring (a discrete equivalent to) Prop.~\ref{prop:continuous} % in the construction of kinetic schemes
for numerical stability does not seem well established yet. But it can quite easily enforced here, see Section~\ref{sec:application}, and we shall therefore consider it here for the practical construction of kinetic schemes.
% with a recipe that ensures % some numerical
% stability beyond the linear-flux case.
Precisely, for % specific application to
linear symmetric-hyperbolic systems of conservation laws \eqref{eq:scl} % like elastodynamics
such that $q\in\RR^L$ are actually the variables after symmetrization (i.e. $F_a^l(q)=A_a^{lm}q_m$ with $A_a^{lm} = A_a^{ml}$ for all $a=1\ldots d$, $1\le m,l\le L$) in particular, we require \emph{linear} Maxwellians $\bar f_{\zeta,l}(q)=\myred{\sum_{m=1}^L}  \Omega_{\zeta,l}^{m}q_{m}$
that satisfy \eqref{req1} and \eqref{req2} with \emph{symmetric positive matrices} % necessary ??
$\Omega_\zeta\in\R^{L\times L}$ for all $\zeta=0\ldots Q-1$, % to ensure \eqref{Htheorem} discrete
further denoting $[\Omega_{\zeta}]_{lm}=\Omega_{\zeta,l}^{m}$ their entries. On noting \eqref{maxwellian}, this is equivalent to require $h_\zeta(f_\zeta)=\frac12 \myred{\sum_{l,m=1}^L} f_{\zeta,l}[\Omega_{\zeta}^{-1}]_{lm}f_{\zeta,m}$ where $\Omega_{\zeta}^{-1}$ denotes the matrix inverse of $\Omega_{\zeta}$ (standard inverse if $\Omega_{\zeta}$ is strictly positive equiv. positive definite, or generalized % inverse restriction to the (component) subspace where the linear operator $\Omega_{\zeta}$ is injective
well-defined by e.g. SVD in the semi-definite case). Then indeed, the following discrete version of Prop.~\ref{prop:continuous} holds:
\begin{proposition}\label{prop:discrete}
Assume $h_\zeta(f_\zeta)=\frac12 f_{\zeta,l}[\Omega_{\zeta}^{-1}]_{lm}f_{\zeta,m}$ 
% where $\Omega_{\zeta}^{-1}$ denotes the inverse of symmetric strictly positive matrices $\Omega_{\zeta}$ acting on (possibly a subspace of) $\R^d\ni f_{\zeta}$ % already said !!!!
then it holds for the kinetic scheme \eqref{eq:Vbgknp1} if $\omega=1$, or with equality if $\omega=2$:
$$ 
\sum\limits_{\zeta=0}^{Q-1} h_\zeta(f_\zeta^{n+1}) + \int_{t^n}^{t^{n+1}} \myred{\sum_{a=1}^d} \partial_a \left( \sum\limits_{\zeta=0}^{Q-1} c^a_\zeta h_\zeta(f_\zeta) \right) ds \le \sum\limits_{\zeta=0}^{Q-1} h_\zeta(f_\zeta^{n})
$$
where $f_\zeta$ is the solution to the transport step of $f^{n+\frac12}$ to $f^{n+1}$. % with an equality when $\omega=2$.
\end{proposition}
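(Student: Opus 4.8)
The plan is to exploit the operator-splitting structure \eqref{projectionStep}--\eqref{transportStep} and to treat the transport sub-step and the projection (relaxation) sub-step separately: the transport sub-step produces the \emph{exact} local balance carrying the flux-divergence term, while the projection sub-step produces the dissipation inequality (equality when $\omega=2$). Throughout I would use that the Maxwellians are linear with $\bar f_\zeta(q)=\Omega_\zeta q$ and $\Omega_\zeta$ symmetric positive, and record the two structural consequences of \eqref{req1}: $\sum_\zeta\Omega_\zeta=\bI$ (so that $\sum_\zeta\bar f_\zeta(q)=q$), and $\Omega_\zeta^{-1}\bar f_\zeta(q)=q$ for every $\zeta$. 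Combined with $h_\zeta(f_\zeta)=\tfrac12 f_\zeta^T\Omega_\zeta^{-1}f_\zeta$ these give $\sum_\zeta h_\zeta(\bar f_\zeta(q))=\tfrac12 q^Tq=\eta(q)$, i.e. the Maxwellian realizes the minimum of $\sum_\zeta h_\zeta$ among all kinetic states sharing the moment $q$; this minimality is what drives the projection estimate.

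For the transport sub-step I would note that on $(t^n,t^{n+1})$ each $f_\zeta$ solves the free transport equation \eqref{transportStepEDP}, $(\partial_t+c_\zeta\cdot\nabla)f_\zeta=0$, starting from $f_\zeta(t^n)=f^{n+\frac12}_\zeta$. Since $h_\zeta$ depends on $f_\zeta$ alone, the chain rule gives pointwise $(\partial_t+c_\zeta\cdot\nabla)h_\zeta(f_\zeta)=0$; summing over $\zeta$ and integrating in time yields the exact, spatially local identity
\[
\sum_\zeta h_\zeta(f^{n+1}_\zeta)+\int_{t^n}^{t^{n+1}}\sum_{a=1}^d\partial_a\Big(\sum_\zeta c_\zeta^a h_\zeta(f_\zeta)\Big)\,ds=\sum_\zeta h_\zeta(f^{n+\frac12}_\zeta),
\]
which is exactly the left-hand side of the claim rewritten through the post-projection state. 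This step needs no convexity, only that $h_\zeta$ is a Casimir of the transport.

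It then remains to show $\sum_\zeta h_\zeta(f^{n+\frac12}_\zeta)\le\sum_\zeta h_\zeta(f^n_\zeta)$, with equality for $\omega=2$. I would write the relaxation as $f^{n+\frac12}_\zeta=f^n_\zeta+\omega E_\zeta$ with $E_\zeta:=\bar f_\zeta(M_0f^n)-f^n_\zeta$, and record the orthogonality $\sum_\zeta E_\zeta=0$ (both states share the moment $M_0f^n$). Expanding the quadratic form gives
\[
\sum_\zeta h_\zeta(f^{n+\frac12}_\zeta)=\sum_\zeta h_\zeta(f^n_\zeta)+\omega\sum_\zeta (f^n_\zeta)^T\Omega_\zeta^{-1}E_\zeta+\frac{\omega^2}{2}\sum_\zeta E_\zeta^T\Omega_\zeta^{-1}E_\zeta.
\]
The crux is the cross term: using $\Omega_\zeta^{-1}\bar f_\zeta=M_0f^n$, symmetry of $\Omega_\zeta$, and $\sum_\zeta f^n_\zeta=M_0f^n$, I would show it equals $-D$, where $D:=\sum_\zeta E_\zeta^T\Omega_\zeta^{-1}E_\zeta\ge0$ is itself $2\big(\sum_\zeta h_\zeta(f^n_\zeta)-\eta(M_0f^n)\big)$. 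The increment then collapses to $-\tfrac12\omega(2-\omega)\,D$, which is $\le 0$ for $\omega\in(0,2]$, equals $-\tfrac12 D$ at $\omega=1$, and vanishes exactly at $\omega=2$. Chaining this with the transport identity gives the statement.

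I expect the only real obstacle to be the bookkeeping of the cross term: one must apply symmetry of $\Omega_\zeta$, the identity $\Omega_\zeta^{-1}\bar f_\zeta=M_0f^n$, and the moment-matching $\sum_\zeta E_\zeta=0$ in the right order to see the cancellation to $-D$. Everything else (the transport identity, the sign of $\omega(2-\omega)$) is routine. A conceptually cleaner route for $\omega=1$ is to invoke directly the variational characterization of $\bar f$ as the constrained minimizer of $\sum_\zeta h_\zeta$, but the explicit quadratic computation has the advantage of covering the full range $\omega\in(1,2]$ at once.
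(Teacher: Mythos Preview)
Your proposal is correct. The transport sub-step is handled identically to the paper. For the projection sub-step you take a somewhat different route: the paper splits into two cases, using for $\omega=1$ the convexity inequality $h_\zeta(f_\zeta^{n+\frac12})-h_\zeta(f_\zeta^{n})\le \partial_{f_\zeta}h_\zeta(f_\zeta^{n+\frac12})\cdot(f_\zeta^{n+\frac12}-f_\zeta^{n})$ and then showing the $\zeta$-sum of the right-hand side vanishes thanks to $\Omega_\zeta^{-1}\bar f_\zeta=M_0f^n$ and \eqref{req1}; for $\omega=2$ it uses instead the polarization identity $h_\zeta(f_\zeta^{n+\frac12})-h_\zeta(f_\zeta^{n})=\tfrac12\Omega_\zeta^{-1}(f_\zeta^{n+\frac12}+f_\zeta^{n})\cdot(f_\zeta^{n+\frac12}-f_\zeta^{n})$ together with $f_\zeta^{n+\frac12}+f_\zeta^{n}=2\Omega_\zeta M_0f^n$. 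Your unified quadratic expansion around $f^n$ uses the same two algebraic ingredients but packages them into the single identity $\sum_\zeta h_\zeta(f_\zeta^{n+\frac12})-\sum_\zeta h_\zeta(f_\zeta^{n})=-\tfrac12\omega(2-\omega)D$, which has the advantage of covering the whole range $\omega\in(0,2]$ at once (in particular the intermediate over-relaxation values the paper does not treat) and of making the dissipation rate explicit. The paper's case-split, on the other hand, highlights that for $\omega=1$ only convexity (not the quadratic structure) of $h_\zeta$ is really used.
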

\begin{proof}
The transport step of $f^{n+\frac12}$ to $f^{n+1}$ satisfies
$$
\partial_t h_\zeta(f_\zeta) + \myred{\sum_{a=1}^d} \partial_a \left( c^a_\zeta h_\zeta(f_\zeta) \right) = 0
$$
thus % pointwise for all a
$$ 
h_\zeta(f_\zeta^{n+1}) + \int_{t^n}^{t^{n+1}} \myred{\sum_{a=1}^d} \partial_a \left( c^a_\zeta h_\zeta(f_\zeta) \right) ds = h_\zeta(f_\zeta^{n+\frac12})
$$
while, using the convexity of $h_\zeta$ and $f_{\zeta,l}^{n+\frac12} =
\myred{\sum_{m'=1}^L} [\Omega_{\zeta}]_{lm'}M_0f_{m'}^n $, it holds:
\begin{multline} \label{32}
h_\zeta(f_\zeta^{n+\frac12}) - h_\zeta(f_\zeta^{n}) \le \partial_{f_\zeta}h_\zeta(f_\zeta^{n+\frac12})\cdot(f_\zeta^{n+\frac12}-f_\zeta^{n})
\\
= % \frac12 % correction
\myred{\sum_{m=1}^L}
f_{\zeta,l}^{n+\frac12}[\Omega_{\zeta}^{-1}]_{lm}
\left( \myred{\sum_{m'=1}^L} [\Omega_{\zeta}]_{mm'}M_0f_{m'}^n - f_{\zeta,m}^n \right) % \omega
= % \frac12 % correction
\myred{\sum_{m=1}^L}
M_0f_{m}^n \left( \myred{\sum_{m'=1}^L} [\Omega_{\zeta}]_{mm'}M_0f_{m'}^n - f_{\zeta,m}^n \right) 
\end{multline}
where the last term in \eqref{32} satisfies 
\begin{equation} \label{33}
\sum_\zeta \left( \myred{\sum_{m'=1}^L} [\Omega_{\zeta}]_{mm'}M_0f_{m'}^n - f_{\zeta,m}^n \right) = 0 
\end{equation}
which concludes the case $\omega=1$. On the other hand, using $x^2-y^2=(x+y)(x-y)$ and $f_{\zeta,l}^{n+\frac12} + f_{\zeta,l}^{n} = 2\myred{\sum_{m'=1}^L}[\Omega_{\zeta}]_{lm'}M_0f_{m'}^n $, it holds when $\omega=2$:
\begin{multline} \label{34}
h_\zeta(f_\zeta^{n+\frac12}) - h_\zeta(f_\zeta^{n}) % = \sum_{\zeta,m,m'} (f_{\zeta,m}^{n+\frac12} + f_{\zeta,m}^{n})[\Omega_{\zeta}]_{mm'}(f_{\zeta,m'}^{n+\frac12} + f_{\zeta,m'}^{n})
= \frac12\Omega_{\zeta}^{-1}(f_{\zeta,l}^{n+\frac12} + f_{\zeta,l}^{n})\cdot(f_\zeta^{n+\frac12}-f_\zeta^{n}) 
\\
= \myred{\sum_{m=1}^L}
M_0f_{m}^n \left( \myred{\sum_{m'=1}^L} [\Omega_{\zeta}]_{mm'}M_0f_{m'}^n - f_{\zeta,m}^n \right)
\end{multline}
and one again finishes the proof on noting that the last term in \eqref{34} satisfies \eqref{33}.
\end{proof}

\mypin{A similar entropy stability as in Prop.~\ref{prop:discrete} is proved in \cite[Chapter 6]{HELIE_Romane_2023_ED269} for the D1Q2 kinetic scheme applied to the scalar transport equation with $\omega\in(1,2)$.}
% Note that other stability conditions are also investigated in \cite[Chapter 6]{HELIE_Romane_2023_ED269} 

\section{Construction of kinetic schemes for 2D elastodynamics}
\label{sec:application}

The $2$-dimensional linear (Hookean) elastodynamics system reads %\footnote{The notation $F_a^m$ for the displacement gradient used below until section~\ref{sec:D2Q5} should not be confused with that generically used above in the previous sections for the fluxes $F^l_a$, e.g. in \eqref{eq:scl}.} % and elsewhere
\begin{equation} 
\label{sys:elasto}
\begin{cases}
\partial_t \FF_a^m - \partial_a u^m = 0 \\ % \partial_t F_a^{x,y}-\partial_a u^{x,y}=0,\\ \partial_t F_b^{x,y}-\partial_b u^{x,y}=0,\\ 
\partial_t u^m - c^2 \partial_a \FF^m_a =0 %  \left( \partial_a F_a^{x,y} - \partial_b F_b^{x,y} \right) 
\end{cases} \,.
\end{equation}

It governs % the Lagrangian description of 
the position $\sum_{m=1}^2x_m(t,\sum_{a=1}^2\boldsymbol{a}_a E^a)e^m$ of a % non-compact
$2$-dimensional body, in the Euclidean physical space equipped with a Cartesian coordinate system $\{e^1,e^2\}$,
using Lagrangian coordinates in a Cartesian system $\{E^1,E^2\}$ equipping a reference configuration of the body,
assuming $\FF_a^m=\partial_a x^m$, $u^m=\partial_t x^m$.

It is a paradigmatic \emph{Friedrichs symmetrizable % symmetric-hyperbolic
linear system of conservation laws}:
$$
q=(\sqrt{c}\FF^1_1,\sqrt{c}\FF^1_2,u^1/\sqrt{c},\sqrt{c}\FF^2_1,\sqrt{c}\FF^2_2,u^2/\sqrt{c})
$$
is solution to a symmetric-hyperbolic system of conservation laws. % with fluxes $F_a=(-{c}u^1,0,-c{c} \FF^1_a,-{c}u^2,0,-c{c} \FF^2_a)$ for $a=1,2$. % and $F_2=(0,-{c}u^1,-c{c} \FF^1_2,0,-{c}u^2,-c{c} \FF^2_2)$.
% The Helmholtz free-energy % of isothermal elastodynamics
% solves an % is the ``mathematical entropy'' solution to the
% aditional conservation law i.e.
% % \begin{equation}
% % \label{eq:energy}
% %  \partial_t \eta + \partial_a \left( c^2 F^m_a u^m \right) = 0 % c^2 f^m u^m
% % \end{equation}
% \begin{equation}
% \label{def:functional}
% \eta %(F_a^m,u^m)
% = \frac12 \sum_{m=1}^2 \left( \sum_a c^2|F^m_a|^2 + |u^m|^2 \right)
% \end{equation}
% % So solutions $q\in C^0\left([0,T),H^s(\RR^2)\right)$ to \eqref{sys:elasto}
% % are well-defined whatever $T>0$ given % six
% % initial fields in $H^s(\RR^2)$, $s\in\RR$ \cite{benzonigavage-serre-2007},
% % \eqref{sys:elasto} is a particular linear case of \eqref{eq:scl} when $d=2$, $L=6$
% so $\eta^*=\eta$, $g_1=-c^2 (u^1 F^1_1+u^2F^2_1)=g_1^*$, $g_2=-c^2 (u^1 F^1_2+u^2F^2_2)=g_2^*$.
%
Noticeably, the linear homogeneous % indifferent to translation in ``body'' (material) space
system \eqref{sys:elasto}, 
%isotropic % indifferent to rotation in ``body'' (material) space
%with uniform $c^2>0$,
symmetrized in $q$, can be interpreted as % also equivalent to 
a vectorial version of the (symmetrized) 2D linear acoustics system for $r=(v,w,p)\in\RR^3$
\begin{equation}
\label{eq:acoustic}
\partial_t r + A_1 \partial_1 r + A_2 \partial_2 r = 0 \\
\end{equation}
that governs velocities $\myred{\sum_{a=1}^2}\sqrt{c} F^m_a E^a \sim v E^1 + w E^2$ and pressures $u^m/\sqrt{c} \sim p$ in two fluids $m=1,2$ with (same) sound celerity $c>0$ parametrizing % linear wave-type fluxes
$$
A_1 = \left(\begin{array}{ccc} 0 & 0 & -c \\ 0 & 0 & 0 \\ -c & 0 & 0 \end{array}\right)
\quad
A_2 = \left(\begin{array}{ccc} 0 & 0 & 0 \\ 0 & 0 & -c \\ 0 & -c & 0 \end{array}\right)
\,.
$$

\bigskip

Here, we develop our construction methodology of a kinetic scheme (based on elements from the previous sections) specifically for the symmetric system on 2D Cartesian grids with grid-size $\Delta>0$.
We consider either D2Q5 or D2Q9 sets of speeds $c_\zeta$ % with uniform time steps $k>0$
(see details below).
We require \emph{linear} Maxwellians $\bar f_{\zeta,l}(q)=\Omega_{\zeta,l}^{m}q_{m}$
satisfying \eqref{req1}, \eqref{req2} and \eqref{decomposition},
which is natural insofar as $\eta^*,G_1^*,G_2^*$ are quadratic functionals of $\varphi$. % $F^m_a,u^m$,
% Still, many candidate Maxwellians exist and
% we propose to require that the kinetic scheme preserves a number of properties of the target system
Moreover we require that the kinetic scheme preserves the % Euclidean
spatial isotropy of the 2D elastodynamics % in the physical Euclidean space
(i.e. $(F^2_1,F^2_2,u^2)$ % $(-c^2 F^2_1,-c^2 F^2_2,-u^2,c^2 F^1_1,c^2 F^1_2,u^1)$
satisfies exactly the same dynamics as $(F^1_1,F^1_2,u^1)$ % $(c^2 F^1_1,c^2 F^1_2,u^1,c^2 F^2_1,c^2 F^2_2,u^2)$
when initial conditions are the same).
So we look only for decompositions \eqref{decomposition} of the form % with separated potentials and linear block Maxwellians
\begin{equation}\label{eq:symPhys}
% h^\star_{\zeta}(\varphi) =  % \sum_{m=1}^2 h^\star_{m,\zeta}(\varphi)
% h^\star_{1,\zeta}(c F^1_1,c F^1_2,u^1) + h^\star_{1,\zeta}(c F^2_1,c F^2_2,u^2)
\partial_{\varphi^l}h^\star_{1,\zeta}(\varphi) %(q)
= \left(\begin{array}{cc} \Omega_\zeta & 0 \\ 0 & \Omega_\zeta \end{array}\right)q^T
\end{equation}
using \emph{symmetric-positive matrices} $\Omega_{\zeta}\in\R^{3\times 3}$ % $\Omega_{\zeta,l,p}$, $l,p\in\{1,2,3\}$
% (for the sake of convexity of $h_\zeta^*$)
which obvioulsy ensure:
\begin{proposition}\label{prop:symPhys}
The numerical approximation
$\sum_{\zeta} f_{\zeta,l}$,  $l\in\{1,2,3\}$, to $({c}F^1_1,{c}F^1_2,u^1/{c})$
satisfies the same dynamics as the numerical approximation $\sum_{\zeta} f_{\zeta,l}$, $l\in\{4,5,6\}$, to $({c}F^2_1,{c}F^2_2,u^2/{c})$
when initial conditions are the same, and \eqref{eq:symPhys} holds.
\end{proposition}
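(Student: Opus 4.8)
The plan is to exploit the block-diagonal structure of the Maxwellian imposed by \eqref{eq:symPhys}, together with the observation that the scheme \eqref{eq:Vbgknp1} couples the vectorial components $l$ of $f_\zeta$ \emph{only} through the Maxwellian: the transport (governed by the speeds $c_\zeta$, common to all $l$) and the relaxation (governed by the single weight $\omega$) both act component-wise. Accordingly I would first split the unknown into its two physical blocks, writing $q=(q^{(1)},q^{(2)})$ with $q^{(1)}=(q_1,q_2,q_3)$ and $q^{(2)}=(q_4,q_5,q_6)$, and record that \eqref{eq:symPhys} means precisely $\bar f_{\zeta,l}(q)=(\Omega_\zeta q^{(1)})_l$ for $l\in\{1,2,3\}$ and $\bar f_{\zeta,l}(q)=(\Omega_\zeta q^{(2)})_{l-3}$ for $l\in\{4,5,6\}$, with the \emph{same} $3\times 3$ matrix $\Omega_\zeta$ governing each block.

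I would then prove by induction on $n$ that $f^n_{\zeta,l}=f^n_{\zeta,l+3}$ for every $\zeta$ and every $l\in\{1,2,3\}$. The base case $n=0$ follows from the initialisation \eqref{initLBM} and the hypothesis of equal initial data $q^0_l=q^0_{l+3}$ ($l\in\{1,2,3\}$): since the Maxwellian uses the same $\Omega_\zeta$ on each block, $f^0_{\zeta,l}=\bar f_{\zeta,l}(q^0)=\bar f_{\zeta,l+3}(q^0)=f^0_{\zeta,l+3}$. For the inductive step, the equality at level $n$ gives $(M_0 f^n)^{(1)}=(M_0 f^n)^{(2)}$ by \eqref{eq:M0}; hence $\bar f_{\zeta,l}(M_0 f^n)=(\Omega_\zeta (M_0 f^n)^{(1)})_l=(\Omega_\zeta (M_0 f^n)^{(2)})_l=\bar f_{\zeta,l+3}(M_0 f^n)$ for $l\in\{1,2,3\}$. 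Because the transport shift $+kc_\zeta$ and the relaxation weight $\omega$ do not depend on $l$, the update \eqref{eq:Vbgknp1} written at the same grid points for the indices $l$ and $l+3$ then yields $f^{n+1}_{\zeta,l}=f^{n+1}_{\zeta,l+3}$.

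Summing over $\zeta$ via \eqref{eq:M0} finally propagates this equality to the moments, so $M_0 f^n_l=M_0 f^n_{l+3}$ for all $n$ and all $l\in\{1,2,3\}$ --- exactly the assertion that the numerical approximation of the first block obeys the same discrete dynamics as that of the second block.

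As the statement itself signals (``which obviously ensure''), there is no genuine obstacle: the whole argument is a one-line induction once the block structure of \eqref{eq:symPhys} is written out. The only points I would be careful to state explicitly are that the lattice speeds $c_\zeta$ and the relaxation parameter $\omega$ are independent of the component index $l$, so that the two blocks are advanced by identical update rules; were either allowed to depend on $l$, the symmetry between the blocks could be broken.
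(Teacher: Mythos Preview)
Your proposal is correct and is precisely the natural elaboration of what the paper leaves implicit: the text merely says ``which obviously ensure'' before stating the proposition and gives no explicit proof, and your induction on $n$ using the block-diagonal Maxwellian together with the $l$-independence of the speeds $c_\zeta$ and the weight $\omega$ is exactly the argument one writes down to justify that obviousness. There is nothing to add.
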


We consider classical sets of kinetic speeds: either the standard D2Q5 case:
$$
c_0 = (0,0); \quad c_1 = \lambda(1,0); \quad c_2 = \lambda(0,1); \quad c_3 = \lambda(-1,0); \quad c_4 = \lambda(0,-1)
$$
with $Q=5$ speeds, or the standard D2Q9 case with four additional speeds:
$$
c_5 = \lambda(1,1); \quad c_6 = \lambda(-1,1); \quad c_7 = \lambda(-1,-1); \quad c_8 = \lambda(1,-1)
$$
which both use grids with step size $\Delta=k\lambda$.
Both cases possess symmetries: % invariance under the multiplicative group generated by a $\pi/2$-rotation: % whatever orientation
given one speed $c_{1}$ in the material space $\RR^2$ equipped with the Cartesian basis $\{E^1, E^2\}$, 
$$
 c_{\zeta} = R_{\pi/2}^{\zeta-1} c_{1}
$$
holds for $\zeta\in\{2,3,4\}$, and
$$
 c_{\zeta} = R_{\pi/2}^{\zeta-5} c_{5}
$$
for $\zeta\in\{6,7,8\}$ when $Q=9$ and $c_{5}=c_1+c_2$, denoting
\begin{equation}
\label{eq:pi2rotation}
R_{\pi/2} = \left(\begin{array}{cc} 0 & -1 \\ 1 & 0 \end{array}\right)
\end{equation}
the (matrix representation of) $\pi/2$-rotation of $\RR^2$.
Now, for the numerical simulation on a \emph{Cartesian} grid of homogeneous physics % materials 
characterized by a uniform constant $c^2$, we naturally also require the preservation of the ambiant-space
symmetries when possible at discrete level, e.g. the $\pi/2$-rotation.

Precisely, still denoting $R_{\pi/2}$ the $3\times 3$ matrix with $2\times 2$ upper-left block \eqref{eq:pi2rotation} complemented by identity (and $R_{-\pi/2}$ its inverse), we require that $f_\zeta$ should satisfy exactly the same dynamics as $R_{\pi/2}^{\zeta-1} f_\zeta$ when initial conditions are also rotated:
\begin{equation}
\label{eq:DMH1}
 \Omega_{\zeta} = R_{\pi/2}^{\zeta-1} \Omega_1 R_{-\pi/2}^{\zeta-1}
\end{equation}
for $\zeta\in\{2,3,4\}$ and
\begin{equation}
\label{eq:DMH2}
 \Omega_{\zeta} = R_{\pi/2}^{\zeta-5} \Omega_5 R_{-\pi/2}^{\zeta-5}
\end{equation}
for $\zeta\in\{6,7,8\}$ when $Q=9$. 

\begin{proposition}
The numerical approximations $(\sum_{\zeta} f_{\zeta,l})$ to $({c}F^1_1,{c}F^1_2,u^1/{c})$ (with $l\in\{1,2,3\}$)
and to $({c}F^2_1,{c}F^2_2,u^2/{c})$ (with $l\in\{4,5,6\}$) are exactly the same as $R_{-\pi/2}(\sum_{\zeta} f_{\zeta,l})$
after rotating the initial conditions similarly.
\end{proposition}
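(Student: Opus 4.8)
The plan is to prove the proposition by an equivariance (commutation) argument: I would show that one step of the kinetic scheme \eqref{eq:Vbgknp1} intertwines with a suitably defined discrete $\pi/2$-rotation operator, and then close by induction on the time index $n$.

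First I would set up the action of the rotation on the discrete kinetic family $\{f_{\zeta,l}\}$. Since $R_{\pi/2}$ maps the Cartesian lattice $\{\boldsymbol{a}_{ij}\}$ of step $\Delta$ onto itself (up to relabelling of indices) and permutes the speeds according to $c_\zeta = R_{\pi/2}^{\zeta-1}c_1$, it induces a permutation $\sigma$ of $\{0,\dots,Q-1\}$ with $R_{\pi/2}c_\zeta = c_{\sigma(\zeta)}$ (fixing $\zeta=0$ and acting as a $4$-cycle on $\{1,2,3,4\}$ and on $\{5,6,7,8\}$). I would then define $(\mathcal{R}f)_\zeta(\boldsymbol{a}) := R_{\pi/2}\, f_{\sigma^{-1}(\zeta)}(R_{-\pi/2}\boldsymbol{a})$, where $R_{\pi/2}$ on the right acts on the vector-component index block-diagonally on the two physical triples $l\in\{1,2,3\}$ and $l\in\{4,5,6\}$. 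A one-line check then gives equivariance of the moment operator, $M_0(\mathcal{R}f)(\boldsymbol{a}) = R_{\pi/2}(M_0 f)(R_{-\pi/2}\boldsymbol{a})$, since $M_0$ is merely a sum over $\zeta$ and is therefore insensitive to the permutation $\sigma$.

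Second — the heart of the matter — I would verify equivariance of the relaxation (Maxwellian) step, namely $\Omega_{\sigma(\zeta)} R_{\pi/2} = R_{\pi/2}\Omega_\zeta$. This is precisely what \eqref{eq:DMH1}--\eqref{eq:DMH2} encode: writing $\sigma(\zeta)=\zeta+1$ on each cycle, \eqref{eq:DMH1} gives $\Omega_{\zeta+1} = R_{\pi/2}\Omega_\zeta R_{-\pi/2}$, whence $\bar f_{\sigma(\zeta)}(R_{\pi/2}q) = \Omega_{\sigma(\zeta)}R_{\pi/2}q = R_{\pi/2}\Omega_\zeta q = R_{\pi/2}\bar f_\zeta(q)$. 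Combined with equivariance of the transport step — which follows from linearity of $R_{\pi/2}$, the identity $R_{\pi/2}(\boldsymbol{a}_{ij}+kc_\zeta) = R_{\pi/2}\boldsymbol{a}_{ij}+kc_{\sigma(\zeta)}$, and the invariance of the lattice — this establishes that the full update \eqref{eq:Vbgknp1} satisfies $\mathcal{R}(\text{update of }f^n) = \text{update of }(\mathcal{R}f^n)$ (the same check works for $\omega=1$ and $\omega=2$, since both steps are equivariant).

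Third, I would conclude by induction. The initial condition \eqref{initLBM} reads $f^0 = \bar f(q^0)$; equivariance of $\bar f$ shows that the kinetic data built from the rotated datum $R_{\pi/2}q^0(R_{-\pi/2}\cdot)$ is exactly $\mathcal{R}f^0$. The commutation above then propagates this to all times, so the run with rotated initial data produces $\mathcal{R}f^n$ at every step; applying $M_0$ and then $R_{-\pi/2}$ spatially recovers the asserted identity between the two approximations (and Prop.~\ref{prop:symPhys} handles the parallel statement for the second triple). I expect the only genuinely delicate point to be the bookkeeping of the permutation $\sigma$ together with the lattice relabelling in a consistent way — in particular checking that the grid of step $\Delta$ is invariant under $R_{\pi/2}$ about the origin and that the transport shift matches the index permutation — since the algebraic equivariance of the relaxation is immediate from \eqref{eq:DMH1}--\eqref{eq:DMH2}.
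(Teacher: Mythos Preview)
Your argument is correct and complete in spirit; the paper itself states this proposition without proof, treating it as an immediate consequence of the symmetry constraints \eqref{eq:DMH1}--\eqref{eq:DMH2} imposed in the construction. Your equivariance-plus-induction outline is exactly the natural way to make that implication precise.

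One small point worth tightening: when you assert that the Maxwellian equivariance $\Omega_{\sigma(\zeta)}R_{\pi/2}=R_{\pi/2}\Omega_\zeta$ ``is precisely what \eqref{eq:DMH1}--\eqref{eq:DMH2} encode'', those relations only cover $\zeta\ge 1$. For the fixed point $\zeta=0$ you need $R_{\pi/2}\Omega_0 R_{-\pi/2}=\Omega_0$, which is not among the hypotheses. This does hold, but it requires the consistency relation \eqref{req1}: since $\Omega_0=I-\sum_{\zeta\ge 1}\Omega_\zeta$ and conjugation by $R_{\pi/2}$ merely permutes the summands via $\sigma$, the sum---and hence $\Omega_0$---is automatically rotation-invariant. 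With that observation added, your bookkeeping is airtight.
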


% % {\color{red}%%%%%%%%%%%
% % When $Q=9$, could we require more ?
% % % on considering the linearity of the target PDE / physical process ?
% % % on considering the $\pi/4$-rotation--and--$\sqrt2$-dilation injection of $Z^2$ into $Z^2$ ?
% % It seems difficult in so far as $\pi/4$-rotation--and--$\sqrt2$-dilation
% % is not an automorphism % bijective endomorphism
% % of the square lattice $Z^2$ !
% % }%%%%%%%%%%%%%%%%%%%%%%%%
% %
% % {\color{blue} Can we guide more $\Omega$ toward rank-1 matrices ? It seems this will follow behind,
% % with the purpose of minimizing (numerical/spurious) diffusion.}

\subsection{D2Q5 scheme}
\label{sec:D2Q5}

Applying the contruction principles above to approximate linear elastodynamics with D2Q5 speeds,
it remains to specify % the symmetric matrix
$$
\Omega_1 = 
\left(
\begin{array}{ccc}
\alpha & x & y \\
x & \beta & z\\
y & z & \gamma
\end{array}
\right)
$$
insofar as % the matrices
$\Omega_{2,3,4}$ can next be computed from $\Omega_1$ using
% ``isotropy'', but more precisely discrete material homogeneity (DMH)
\eqref{eq:DMH1}:
% because we consider homogeneous materials where waves ought to propagate indifferently in any material direction,
%$$ \Omega_2 = R_{\pi/2} \Omega_1 R_{-\pi/2} \quad \Omega_3 = R_{\pi/2} \Omega_2 R_{-\pi/2} \quad \Omega_4 = R_{\pi/2} \Omega_3 R_{-\pi/2} $$
\begin{equation} % \Omega_4 = R_{-\frac\pi2}\Omega_1R_{\frac\pi2} !!
\label{Omega}
\Omega_2 =
\left(
\begin{array}{ccc}
\beta & -x & -z \\
-x & \alpha & y\\
-z & y & \gamma
\end{array}
\right),\ % \quad
\Omega_3 =
\left(
\begin{array}{ccc}
\alpha & x & -y \\
x & \beta & -z\\
-y & -z & \gamma
\end{array}
\right),\ % \quad
\Omega_4 =
\left(
\begin{array}{ccc}
\beta & -x & z \\
-x & \alpha & -y\\
z & -y & \gamma
\end{array}
\right)
\,.
\end{equation}
% {\color{green}%%%%%%%%%%%%%%%%%%%%%%%%
% $$
% \Omega_2 =
% \left(
% \begin{array}{ccc}
% \beta & -a & -d \\
% -a & \alpha & b\\
% -d & b & \gamma
% \end{array}
% \right), \quad
% \Omega_3 =
% \left(
% \begin{array}{ccc}
% \alpha & a & -b \\
% a & \beta & -d\\
% -b & -d & \gamma
% \end{array}
% \right), \quad
% \Omega_4 =
% \left(
% \begin{array}{ccc}
% \beta & -a & d \\
% -a & \alpha & -b\\
% d & -b & \gamma
% \end{array}
% \right)
% $$
% }%%%%%%%%%%%%%%%%%%%%%%%%
The consistency relations \eqref{decomposition}, equivalent to % and not simply sufficient to imply, as usual
(\ref{req1}--\ref{req2}) with linear Maxwellians, impose % in the case of linear Maxwellians
$$
\lambda\left(\Omega_1-\Omega_3\right) = 
\left(
\begin{array}{ccc}
0 & 0 & -c \\
0 & 0 & 0\\
-c & 0 & 0
\end{array}
\right) 
\quad
\lambda\left(\Omega_2-\Omega_4\right) = 
\left(
\begin{array}{ccc}
0 & 0 & 0 \\
0 & 0 & -c \\
0 & -c & 0
\end{array}
\right) 
$$
i.e.
$$
y=-\frac{c}{2\lambda} \quad z=0 \,,
$$
and % with definition \eqref{def:functional} for entropy
\begin{align}
\Omega_0
% & = \nonumber \left(
% \begin{array}{ccc}
% 1 & 0 & 0 \\
% 0 & 1
%  & 0\\
% 0 & 0 & 1
% \end{array}
% \right)
% - \sum_{\zeta=1}^4 \Omega_\zeta
% \\ &
=
\left(
\begin{array}{ccc}
1 - 2(\alpha+\beta) & 0 & 0 \\
0 & 1 - 2(\alpha+\beta)  & 0\\
0 & 0 & 1 - 4\gamma
\end{array}
\right)
\,.
\end{align}
So there remain infinitely many choices for a scheme like \eqref{eq:Vbgknp1}.
%that converges at first-order when $\omega=1$ following Prop.~\ref{prop:bouchut} \emph{providing it is numerically stable}.Now,

To ensure numerical stability, we moreover require Prop.~\ref{prop:discrete}, that is: % the convexity of the functions $h_\zeta^*$ % for H stability
all the matrices $\Omega_\zeta$ should be positive.
% SYLVESTER
Here, in view of \eqref{Omega} we need only check that $\Omega_1$ and $\Omega_0$ are positive, % semi-definite
i.e. all their principal minors are positive: % not only the leading one
% it is known e.g. from Theorem 15 p. 162 in Linear Algebra and Its Applications De Peter D. Lax
% https://univ-scholarvox-com.extranet.enpc.fr/reader/docid/41000276/page/177
% see also HornJohnson1990 where this is an ``exercise''
% or Meyer2000 section 7.6 Positive Definite Matrices, page 566 (but the proof is ugly)
\begin{multline}
\label{ineq}
\alpha \beta \geq x^2,\ \alpha \gamma \geq \frac{c^2}{4\lambda^2},\ \alpha, \beta, \gamma \geq 0,\
% \alpha \beta \gamma - a^2 \gamma - \frac{c^2\beta}{4\lambda^2} \ge 0,\ \alpha \geq 0,\ redundant using the two above
% while positivity of $\Omega_0$ asks for
\quad
\alpha + \beta \leq 1/2,\ \gamma \leq 1/4 \,.
\end{multline}

\myblu{It is remarkable that here, for a $3\times 3$ SCL in two dimensions, the ``(kinetic) entropy'' conditions \eqref{ineq} imply the positivity of the symmetric matrix $B(q)$ defined in \eqref{req3bis},
which specifically reads in $\RR^6$ here as
$$ % \begin{bmatrix}\end{bmatrix}
\left[\begin{array}{c|c}
M_{11}\bar f- A_1A_1 & -A_1A_2
\\
\hline
-A_2A_1 & M_{22}\bar f- A_2A_2        
      \end{array}\right]\,.
$$
Indeed, our conditions \eqref{ineq} imply
$$
\alpha \gamma \geq \frac{c^2}{4\lambda^2},\frac1{4\gamma}\ge1 \Rightarrow \alpha \geq \frac{c^2}{\lambda^2}
$$
and
$$
\alpha \gamma \geq \frac{c^2}{4\lambda^2},\frac1{2\alpha}\ge1 \Rightarrow \gamma \geq \frac{c^2}{2\lambda^2}
$$
i.e. $B(q)\ge0$. Now, although % not surprising, because already 
established for (discrete velocity) kinetic approximations of scalar equations \cite{aregbadriollet-natalini-2000,HELIE_Romane_2023_ED269}, that implication has not yet been proved generically, for kinetic approximations of any (symmetric-hyperbolic, multidimensional) SCL. Recall that one consequence of the positivity of $B(q)$ is that the Chapman-Enskog expansion of the (discrete velocity) kinetic scheme is actually a diffusive approximation of the target SCL, which is a desired stability requirement, see \cite{aregbadriollet-natalini-2000}. 
}

One noticeable choice then corresponds to optimizing the accuracy of the scheme on aiming at \eqref{req3}
as ``second-order'' consistency conditions (equivalent to minimum numerical diffusion in the standard analysis by equivalent equations), where
\begin{multline*}
M_{11}\bar f\equiv \lambda^2(\Omega_1+\Omega_3)=
2\lambda^2\left(
\begin{array}{ccc}
\alpha & x & 0 \\
x & \beta & 0\\
0 & 0 & \gamma
\end{array}
\right)
\quad
M_{12}\bar f = 0 = M_{21}\bar f
\\
M_{22}\bar f\equiv \lambda^2(\Omega_2+\Omega_4)=
2\lambda^2\left(
\begin{array}{ccc}
\beta & -x & 0 \\
-x & \alpha & 0\\
0 & 0 & \gamma
\end{array}
\right)
\quad
A_1A_1= \left(
\begin{array}{ccc}
c^2 & 0 & 0 \\
0 & 0 & 0\\
0 & 0 & c^2
\end{array}
\right)
\\
A_2A_2= \left(
\begin{array}{ccc}
0 & 0 & 0 \\
0 & c^2 & 0\\
0 & 0 & c^2
\end{array}
\right)
\quad
A_1A_2= \left(
\begin{array}{ccc}
0 & c^2 & 0 \\
0 & 0 & 0\\
0 & 0 & 0
\end{array}
\right)
\quad
A_2A_1= \left(
\begin{array}{ccc}
0 & 0 & 0 \\
c^2 & 0 & 0\\
0 & 0 & 0
\end{array}
\right)\,.
\end{multline*}
The ``second-order'' conditions \eqref{req3} cannot be fully fullfilled, because % D2Q5 imposes
$M_{12}\bar f = 0 = M_{21}\bar f$. But one can minimize numerical diffusion using $\beta=0=x$ and e.g.
\begin{equation*}
% 2\lambda^2\alpha=c^2 \quad 2\lambda^2\gamma=c^2 impossible because \eqref{ineq} : $ \lambda^2/2 \ge c^2\ge \lambda^2$
\underset{\{0\leq \alpha \leq 1/2 \quad 0\leq \gamma \leq 1/4 \quad \lambda^2 \geq \frac{c^2}{4\alpha \gamma}\}}{\mathop{\rm argmin}} J(\alpha,\gamma,\lambda):=\left( |2\lambda^2\alpha-c^2|^2 + |2\lambda^2\gamma-c^2|^2 \right)
% coercive in \gamma,\alpha,\lambda -- convex (strictly) ? Hessian !?
% 2 \lambda^4    -\mu     4\lambda(2\lambda^2\alpha-c^2)
% -\mu       2 \lambda^4  4\lambda(2\lambda^2\gamma-c^2)
%           Y=4(\alpha+\gamma)(\lambda^2(\alpha+\gamma)-2c^2) + 8(\lambda^2\alpha^2+\lambda^2\gamma^2) -3\mu c^2/2\lambda^4
% det(H-XI) = (2\lambda^4-X)(  (2\lambda^4-X)(Y-X) - 4\lambda(2\lambda^2\alpha-c^2)4\lambda(2\lambda^2\gamma-c^2)
% X = 2\lambda^4 ou .5*( Y+2\lambda^4 pm sqrt( delta ) ) >0 dès que
% delta = (Y+2\lambda^4)^2- 4 * 4\lambda(2\lambda^2\alpha-c^2)4\lambda(2\lambda^2\gamma-c^2) >0 CHECK !?
% expand (4*(a+g)*(l^2*(a+g)-2*c^2) + 8*(l^2*a^2+l^2*g^2))^2 -4*4*l*(2*l^2*a-c^2)*4*l*(2*l^2*g-c^2)
% a^4 l^4 - 192 a^3 c^2 l^2 + 192 a^3 g l^4 + 64 a^2 c^4 - 320 a^2 c^2 g l^2 + 352 a^2 g^2 l^4 + 128 a c^4 g - 320 a c^2 g^2 l^2 + 192 a g^3 l^4 + 64 c^4 g^2 - 192 c^2 g^3 l^2 + 144 g^4 l^4
% + 128 a c^2 l^4 - 256 a g l^6 - 64 c^4 l^2 + 128 c^2 g l^4 ??
\end{equation*}
that is,
\begin{equation}\label{choice-min}
\gamma=\frac14 \quad \alpha=\frac12 \quad \lambda=\sqrt2 c     
\end{equation}
on noting $J$ is smooth, coercive, constrained on bounded compact domain,
with extremas reached % using KKT conditions
on boundaries $\alpha=\frac12$, $\gamma=\frac14$ using ``Lagrange multiplyers'' $\mu,\nu$
$$
\gamma \lambda^2 = \frac{c^2}2 + \frac{c^2\mu}{\lambda^2}
\quad
\alpha \lambda^2 = \frac{c^2}2 + \frac{c^2\nu}{\lambda^2}
$$
compatible with $\lambda\ge\sqrt{2}c$.
% solve by Lagrangian J + \mu (positive condition)
In the sequel, we shall compare \eqref{choice-min}
with a more %diffusive choice (same CFL)
%$$ \gamma=\frac14 \quad \alpha=\frac14 \quad \lambda=\sqrt2 c $$
%Attention: ce jeu de paramètres ne vérifie pas la condion $\alpha\gamma \geq c^2/4\lambda^2 = 1/8$
%or 
standard CFL $\frac12$, i.e. $\lambda=2 c$ (so $\alpha\gamma \geq 1/16$), and three extremal values for $\alpha,\gamma$:
\begin{align*}
 & \gamma=\frac14 \quad \alpha=\frac12,  \\ % \quad \lambda=2 c \,. 
  \textrm{or} \quad & \gamma=\frac14 \quad \alpha=\frac14, \\
  \textrm{and also} \quad & \gamma=\frac18 \quad \alpha=\frac12.
  \end{align*}
%***

Note that when $\gamma=\frac14$, $\alpha=\frac12$,
the D2Q5 scheme is in fact a D2Q4 since $\Omega_0=0$.

\subsection{D2Q9 scheme}
\label{sec:D2Q9}

Applying the contruction principles above with D2Q9 speeds, we have only to specify
$$
\Omega_1 =
\left(
\begin{array}{ccc}
\alpha & x & y \\
x & \beta & z\\
y & z & \gamma
\end{array}
\right)
\quad
\Omega_5 =
\left(
\begin{array}{ccc}
\tilde \alpha & \tilde x &\tilde  y \\
\tilde x & \tilde \beta & \tilde z\\
\tilde y & \tilde z & \tilde \gamma
\end{array}
\right)
$$
but not less since it does not seem obvious to link $\Omega_1$ and $\Omega_5$ through symmetries of the Cartesian grid.
The consistency relations \eqref{decomposition} (or \eqref{req1} and \eqref{req2}) impose
\begin{multline}
\lambda\left(\Omega_1-\Omega_3\right) + \sqrt2\lambda\left(\Omega_5-\Omega_6-\Omega_7+\Omega_8\right)=
\left(
\begin{array}{ccc}
0 & 0 & -c \\
0 & 0 & 0\\
-c & 0 & 0
\end{array}
\right)
\\
\lambda\left(\Omega_2-\Omega_4\right) + \sqrt2\lambda\left(\Omega_5+\Omega_6-\Omega_7-\Omega_8\right)=
\left(
\begin{array}{ccc}
0 & 0 & 0 \\
0 & 0 & -c \\
0 & -c & 0
\end{array}
\right)
\end{multline}
i.e.
\begin{align}
2\lambda (y + \sqrt2 \tilde y - \sqrt2 \tilde z) & =-c
\\
2\lambda (z + \sqrt2 \tilde z + \sqrt2 \tilde y) & = 0
\\
2\lambda (y + \sqrt2 \tilde z - \sqrt2 \tilde y) & =-c
\\
2\lambda (z + \sqrt2 \tilde y + \sqrt2 \tilde z) & = 0
\end{align}
so $$ \tilde y = \tilde z = 0 = z \quad y = -\frac{c}{2\lambda} $$
and % with definition \eqref{def:functional} for entropy
\begin{align}
\Omega_0
% & = \nonumber \left(
% \begin{array}{ccc}
% 1 & 0 & 0 \\
% 0 & 1
%  & 0\\
% 0 & 0 & 1
% \end{array}
% \right)
% - \sum_{\zeta=1}^4 \Omega_\zeta
% \\ &
=
\left(
\begin{array}{ccc}
1 - 2(\alpha+\beta+\tilde \alpha+\tilde \beta) & 0 & 0 \\
0 & 1 - 2(\alpha+\beta+\tilde \alpha+\tilde \beta)  & 0\\
0 & 0 & 1 - 4(\gamma+\tilde \gamma)
\end{array}
\right)
\,.
\end{align}
Furthermore, the positivity of $\Omega_0$, $\Omega_1$ and $\Omega_5$ requires:
\begin{multline}
\label{ineq1}
\alpha \beta \geq x^2,\ \alpha \gamma \geq \frac{c^2}{4\lambda^2},\
\tilde \alpha \tilde \beta \geq \tilde x^2,\ \tilde \alpha \tilde \gamma \geq 0,\
\\
\tilde \alpha, \tilde \beta, \tilde \gamma \geq 0,\alpha, \beta, \gamma \geq 0,\
\alpha+\beta+\tilde \alpha+\tilde \beta \leq 1/2,\ \gamma +\tilde \gamma \leq 1/4 \,.
\end{multline}
Finally, on computing
\begin{multline*}
M_{11}\bar f\equiv \lambda^2(\Omega_1+\Omega_3) + 2\lambda^2(\Omega_5+\Omega_6+\Omega_7+\Omega_8) =
2\lambda^2\left(
\begin{array}{ccc}
\alpha + 2(\tilde \alpha+\tilde \beta) & x & 0 \\
x & \beta + 2(\tilde \alpha+\tilde \beta) & 0\\
0 & 0 & \gamma + 2\tilde \gamma
\end{array}
\right)
\\
M_{12}\bar f = M_{21}\bar f \equiv 2\lambda^2(\Omega_5-\Omega_6+\Omega_7-\Omega_8) =
2\lambda^2\left(
\begin{array}{ccc}
2(\tilde \alpha-\tilde \beta) & 4\tilde x & 0 \\
4\tilde x & 2(\tilde \beta-\tilde \alpha) & 0\\
0 & 0 & 0
\end{array}
\right)
\\
M_{22}\bar f\equiv \lambda^2(\Omega_2+\Omega_4)+ 2\lambda^2(\Omega_5+\Omega_6+\Omega_7+\Omega_8)=
2\lambda^2\left(
\begin{array}{ccc}
\beta + 2(\tilde \alpha+\tilde \beta) & -x & 0 \\
-x & \alpha + 2(\tilde \alpha+\tilde \beta) & 0\\
0 & 0 & \gamma + 2\tilde \gamma
\end{array}
\right)
\end{multline*}
minimizing numerical diffusion leads, using \eqref{ineq1}, to
$$
\beta=x=0=\tilde\alpha=\tilde\beta=\tilde\gamma=\tilde x,\ \lambda^2 = 2c^2,\ \gamma=\frac14,\ \alpha=\frac12 %\frac14
$$
that is the D2Q5 scheme already constructed in the previous section~!
Nevertheless, to see the effect of a few additional celerities (i.e. kinetic directions), we could also do the following choice:
\begin{equation}\label{paramd2q9}
\beta = 0 \quad \gamma = \frac14 \quad
\tilde \alpha = \tilde \beta = \tilde x = \frac{c^2}{16\lambda^2} \quad
% an intermedaite solution for cross diffusion which implies
% x \in \left(\frac{c^2}{\lambda^2},\frac12-\frac{c^2}{8\lambda^2}\right) % so \lambda\ge \frac{3c}2
\lambda = \frac{3c}2 \quad \textrm{and} ~ \alpha = \frac49.
\end{equation}
%and $\alpha\in\{\frac49,\frac12\}, \tilde x \in\{0,\frac1{36}\}$.

\section{Numerical evaluation}
\label{sec:numerical}

Here we evaluate numerically our vectorial kinetic schemes built in Section~\ref{sec:application},
for the % numerical
approximation in $L^2\left(\Dcal\right)$, $\Dcal:=\{\|\bolda\|\le R\}$, %\equiv B_R(0) not (-R,R)^2
at $t=T$ of % smooth
solutions to \eqref{sys:elasto} computed on $\Dcal_T \supset \{\|\bolda\|\le R+cT\}$, $t\in(0,T)$. 
% not [-(R+cT),(R+cT)]^2
Note that the conditions used at the boundary $\partial\Dcal_T$ % of $\Dcal_T$ for $t>0$ 
are not influential
insofar as % the simulation domain $\Dcal$ contains the whole influence domain of $(-R,R)^2$
the simulation domain $\Dcal_T$ contains the % initial
dependence domain of $\Dcal$ at $t=T$.
% (outside which initial values can be identified with boundary conditions at $t\in(0,T)$, but do not influence the solution in $(-R,R)^2$).
Simulations can be thus compared on $\Dcal$ % at $t=T$
with smooth analytical % finite-energy
solutions to \eqref{sys:elasto} on $\R^d$, which can be defined e.g. from
\begin{equation}
\label{eq:wave}
  (\partial^2_{tt}-c^2 \myred{\sum_{\alpha=1}^d})u^m=0 \text{ for } m=1,2
\end{equation}
complemented with $u^m|_{t=0}$ and $\partial_t u^m|_{t=0}=c^2\myred{\sum_{\alpha=1}^d}\partial_\alpha F^m_\alpha|_{t=0}$, % analytically 
also computable as % Kirchoff's formula 
\begin{equation}
\label{eq:kirchoff}
 u^m(t,\boldsymbol{a})=\partial_t\frac1{2\pi}\int_{D_{t,\boldsymbol{a}}}\frac{u^m|_{t=0}(\boldsymbol{b})}{\sqrt{c^2t^2-|\boldsymbol{a}-\boldsymbol{b}|^2}}d\boldsymbol{b} 
  - \frac1{2\pi}\int_{D_{t,\boldsymbol{a}}}\frac{\partial_t u^m|_{t=0}(\boldsymbol{b})}{\sqrt{c^2t^2-|\boldsymbol{a}-\boldsymbol{b}|^2}}d\boldsymbol{b}
\end{equation}
where $D_{t,\boldsymbol{a}}:=\{\boldsymbol{b}, ~ c^2t^2\ge|\boldsymbol{a}-\boldsymbol{b}|^2\}$ is the domain of influence on the solution at $(t,\boldsymbol{a})$.
% Moreover,
Focusing here % more specifically 
on axisymmetric % initial conditions, thus also analytical 
solutions (that depend only on $r>0$ the distance to the origin),
% particular solutions are possible using variables separation u^m(t,r)= R_k(r)g_k(t) for any real $k$
% $$ \frac1{r R_k} \partial_r \left( r \partial_r R_k \right) = k^2 = \frac1{c^2g_k} \partial_{tt}^2 g_k $$
% g_k = a_k\sin(wt) + b_k\cos(wt) with c^2w^2 = k^2 and R_k = c_k J_0(kr) + d_k Y_0(kr)
% using Bessel functions of first and second kind J_0,Y_0
% https://mathworld.wolfram.com/HelmholtzDifferentialEquationCircularCylindricalCoordinates.html
let us introduce the (zero-order) Hankel transform of $u^m$ % of order 0 
$$
\widetilde{u^m}(k,t) = \int_0^\infty r J_0(kr)u^m(r,t) dr 
$$
where $J_0$ is zero-order Bessel function, % of first kind 
also often denoted $\mathcal{H}_0(u^m)(k,t)$. % bessel functions are orthogonal !! 10.1088/0143-0807/36/1/015016
It satisfies
$$ \int_0^\infty % r \frac1{r} 
\partial_r \left( r \partial_r u^m(r,t) \right)  J_0(kr) dr = - k^2 \tilde u^m(k,t) $$
so one can also compute solutions to \eqref{eq:wave} by
\begin{equation}
\label{eq:hankel}
u^m(t,r) = \int_0^\infty \left( \widetilde{u^m|_{t=0}}(k) \cos(ckt)   
+ \widetilde{\partial_t u^m|_{t=0}}(k) \frac{\sin(ckt)}c \right) J_0(kr) k dk \,.
\end{equation}
For instance, recalling $r^2 = ||\boldsymbol{a}||^2$, choosing % axisymmetric initial conditions
% \begin{enumerate}
%  \item $u^m|_{t=0}(r)=A(r^2+a^2)^\frac12$ so $\widetilde{u^m|_{t=0}}(k)=\frac{A}k e^{-ak}$
% yields
% \begin{multline*}
% u^m(t,r)=A \mathop{\rm Re}\left(r^2+(a+ict)^2\right)^{\frac12} \text{ CHECK} % TBC
% \\
% \equiv \frac{A}{\sqrt{2}}\left( r^2+a^2-c^2t^2 + \sqrt{ r^4+a^4+c^4t^4 + 2r^2a^2-2r^2c^2t^2 +2a^2c^2t^2 } \right)^{\frac12},
% \end{multline*}
% \item 
$\partial_t u^m|_{t=0}=0=\myred{\sum_{\alpha=1}^d}\partial_\alpha F_\alpha^m|_{t=0}$ 
and
$u^m|_{t=0}(r)=\kappa e^{-\mu r^2}$ yields  \cite[Chap.7]{IntTraAndTheAppThiEdi} $\widetilde{u^m|_{t=0}}(k)=\frac{\kappa}{2 \mu}e^{-\frac{k^2}{4\mu}}$ so
\begin{equation}
\label{eq:anasol}
u^m(t,r)= \frac{\kappa}{2 \mu} \int_0^{+\infty} e^{-\frac{k^2}{4\mu}} \cos(ck t) J_0(k r) k dk = \mathcal{H}_0\left(\frac{\kappa}{2\mu} e^{-\frac{k^2}{4\mu}} \cos(ckt) \right)(t,r). 
\end{equation}
The latter analytic solution can be numerically computed with high precision given $r$,
see e.g. the solution plot in Fig. \ref{fig:analytic_solution} using the open-source {\tt gsl}\footnote{The discrete Hankel transform, with $M=2^{12}$ zeros of $J_0$ and $X=4$ precisely in Fig. \ref{fig:analytic_solution},
using the notations of https://www.gnu.org/software/gsl/doc/html/dht.html}.

\begin{figure}[ht]
    \centering
    %\includegraphics[scale=0.5]{convergence.pdf}
    %\caption{Comparison of $L_2$ norm error on $p$ at final time. Results obtained with parameters $T = \frac14$, $c = 1$, $\lambda = 1$, $\alpha = 400$, $R = \frac14$ and $CFL = 0.5$.}
    %\label{fig:convergence_FD}
    \includegraphics[scale=0.8]{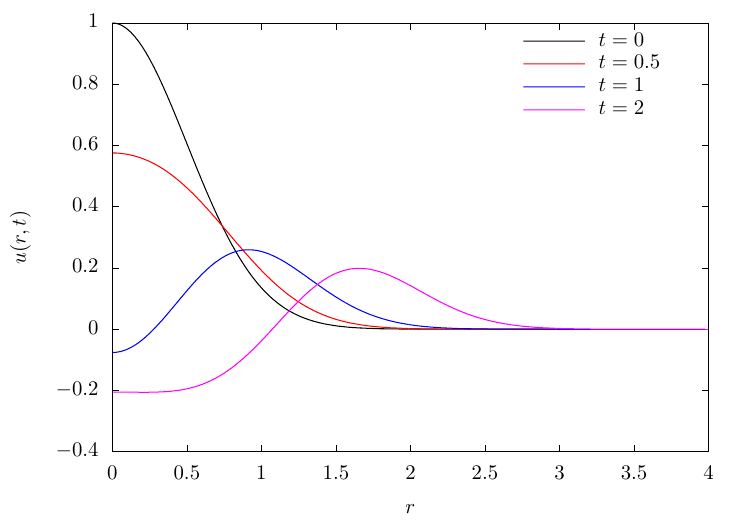}
    \caption{%Analytic 
    Solution \eqref{eq:anasol} for $\kappa = 1, \mu =2$ computed at $t\in\{0;0.5;1;2\}$ as a function of $r$
    using {\tt gsl}.}
    \label{fig:analytic_solution}
\end{figure}
 
% \end{enumerate}
% where we recall $J_0$ is 0-order Bessel function of the first kind

For benchmarking purposes, we also compute numerical approximations % on $\Dcal_T:=[-(R+cT),(R+cT)]^2$, $t\in(0,T)$
using other discretization methods on the same uniform Cartesian grid of $\Dcal_T$, with square cells $\Delta\times\Delta$ centered at $\boldsymbol{a}_{ij}$, $ij\in\Ical:=\{-I,-I+1,\dots,I\}$ ($I\in\NN$),
ensuring a finite propagation speed of the information through \emph{time-explicit} schemes. We impose as initial condition % the exact solution 
$u_0(r)=\kappa e^{-\mu r^2}$ for $\kappa = 1, \mu =2$, with $r = \sqrt{x_i^2+y_j^2}$ at cell centers $(x_i, y_j)\in\Dcal_T:=(-4,4)^2$ in the FV case (i.e. simply grid points in the FD case). % this is a second order approximation of cell-averaged values $u_{ij}^0 = u_0(r) + O(\Delta^2)$. These are detailed in Appendix. % MINH

\smallskip

A mesh convergence study is plotted on Fig. \ref{fig:convergence_FV_BGK} when $R=2$, $c=\frac1{\sqrt{2}}$ at $T=1$.
Our proposed D2Q5 vectorial kinetic schemes are compared one another, when $\omega=1$ and when $\omega=2$,
with % standard 
first and second order FV schemes detailed in Appendix~\ref{sec:FV}. 

\mygreen{First, our numerical experiments show the convergence of the vectorial kinetic schemes,
at rates $O(\Delta)$ and $O(\Delta^2)$ respectively when $\omega=1$ and when $\omega=2$,
whenever the parameters $\lambda, \alpha,\gamma$ satisfy \eqref{ineq}.
% Although the convergence rates were expected see e.g. \cite{Coulette2020} whenever the scheme converges, it is noticeable that second order does converge when $\omega=2$ and not only when $\omega=2^-$, as seen in the literature.
It is remarkable that when only the less stringent conditions $\alpha\ge\frac{c^2}{\lambda^2}$, $\gamma\ge\frac{c^2}{2\lambda^2}$ (so $B(q)\ge0$) are satisfied \emph{strictly} (i.e. $\lambda<c\sqrt{2}$ and \eqref{ineq} are not all satisfied)
% otherwise $\lambda\ge c\sqrt{2}$ + $\alpha\ge\frac{c^2}{\lambda^2}$, $\gamma\ge\frac{c^2}{2\lambda^2}$ => \eqref{ineq}
then we observe numerical instabilities. % blow-up % no convergence
For instance, our D2Q5 schemes all blow up % on $t\in (0,1)$
with % a mesh of $2^{11}$ cells i.e.
$\Delta = 2^{-9} \equiv .001953125$
when $\lambda = c(\sqrt{2} - .1)$ whatever $\omega\in (1,2)$,
and for $\omega \ge 1.9$ when $\lambda = c(\sqrt{2} - .01)$.
Such numerical results are not surprising in view of previous results
\cite{aregbadriollet-natalini-2000,HELIE_Romane_2023_ED269}.
A full rigorous theory regarding the necessary/sufficient conditions for convergence of the kinetic schemes toward SCL however remains needed.}
% As we can see, the FV scheme can find as well the expected first and second order convergence rate.

\mygreen{Second, we observe that the kinetic schemes with largest $CFL = 1/\sqrt{2}$, i.e. using $\lambda = \sqrt{2}c, ~ \gamma = 1/4, ~ \alpha = 1/2$ are the most accurate at both first and second order ($\omega = 1, ~ \omega = 2$).
That parameter choice was motivated by minimizing the numerical diffusion when $\omega = 1$ (recall the proof of Prop.~\ref{prop:bouchut}).
The D2Q9 schemes using a slightly more diffusive CFL coefficient $\lambda = 3c/2$ from \eqref{paramd2q9}
are second best, next come the cases $\lambda = 2c$ corresponding to a 1/2 CFL condition.
Moreover, when simulated with $\omega=1$ or $2$ in the stability region of Prop.~\ref{prop:discrete},
% we have obtained the expected convergence results at first and second order and
different (admissible) values of $\alpha, \gamma$ have remarkable impact on the accuracy of numerical solution.
This also call for further studies (of the discretization error), theoretically and numerically (note that $\beta\neq0$, $x\neq0$ in the Maxwellians has not been tested here.)
}

Interestingly, we observe that the convergences of the first order FV scheme and that of the first order kinetic scheme with $\lambda = 2c, \gamma = 1/4 = \alpha$ are superimposed, which is not surprising insofar as, recalling $\lambda=\frac\Delta{k}$, the scheme 
$$
q^{n+\frac12}_{i,j}
= \Omega_0 q^{n-\frac12}_{i,j}
+ \Omega_1 q^{n-\frac12}_{i-1,j}
+ \Omega_2 q^{n-\frac12}_{i,j-1}
+ \Omega_3 q^{n-\frac12}_{i+1,j}
+ \Omega_4 q^{n-\frac12}_{i,j+1}
$$
that results from LBM for the first-order moments when $\lambda=2c$ i.e. 
$$
\Omega_1 
= \begin{pmatrix} 
\frac14 & 0 & -\frac14 \\
0 & 0 & 0 \\
-\frac14 & 0 & \frac14
\end{pmatrix} 
\quad
\Omega_0 
= \begin{pmatrix} 
\frac14 & 0 & 0 \\
0 & \frac14 & 0 \\
0 & 0 & 0
\end{pmatrix} 
$$
then exactly coincides with the first-order FV scheme at CFL $\frac12$, see (\ref{eq:vhnFV}--\ref{eq:whnFV}--\ref{eq:phnFV}) in Appendix~\ref{sec:FV}. 
Similarly, the second-order % converging 
numerical results for the Yee scheme look very much like those for the kinetic scheme with $\lambda = 2c, \gamma = 1/8, \alpha = 1/2$. \mycomment{though it is not obvious => compute amplification matrix ??
$$
\Omega_1 
= \begin{pmatrix} 
\frac12 & 0 & -\frac14 \\
0 & 0 & -\frac14 \\
\frac-14 & 0 & \frac18
\end{pmatrix} 
\quad
\Omega_0 
= \begin{pmatrix} 
0 & 0 & 0 \\
0 & 0 & 0 \\
0 & 0 & \frac12
\end{pmatrix} 
$$}

% Furthermore, we have numerically observed that the choice of parameters $\lambda, \alpha,\gamma$ respecting condition \eqref{ineq} is mandatory for the stability of numerical solution. These numerical results need more in-deep investigation to confirm the necessity of such stability conditions.

\begin{figure}[ht]
    \centering
    %\includegraphics[scale=0.5]{convergence.pdf}
    %\caption{Comparison of $L_2$ norm error on $p$ at final time. Results obtained with parameters $T = \frac14$, $c = 1$, $\lambda = 1$, $\alpha = 400$, $R = \frac14$ and $CFL = 0.5$.}
    %\label{fig:convergence_FD}
    \includegraphics[scale=0.7]{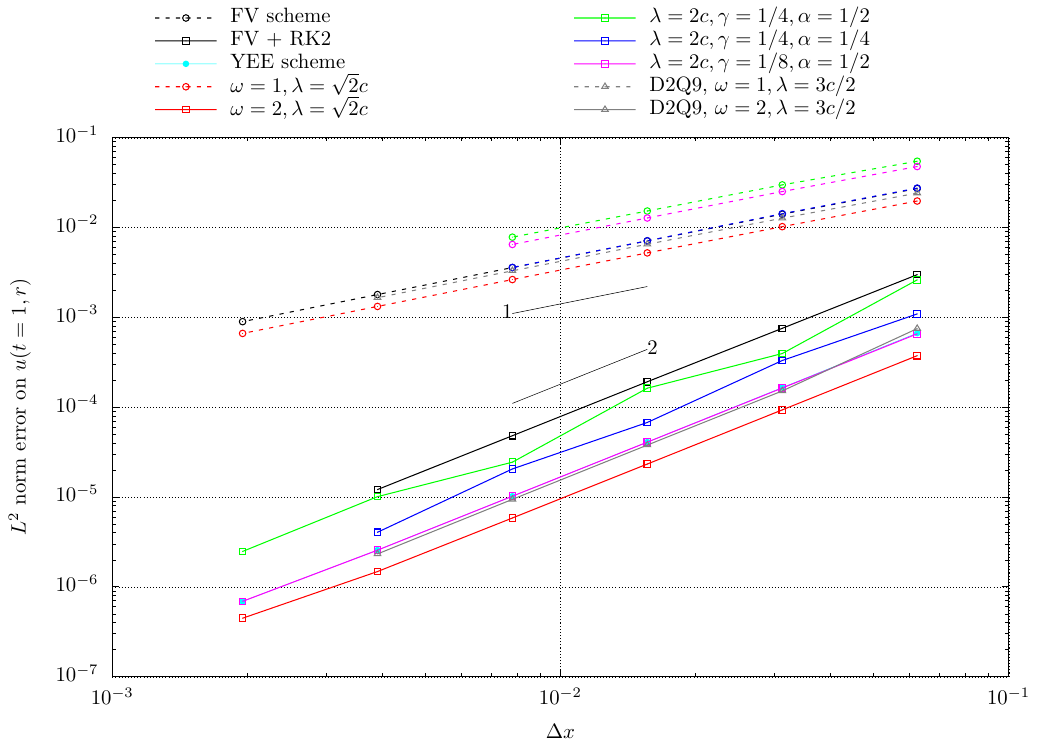}
    \caption{Errors in $L_2(\{\|\bolda\|\le 2\})$ 
    of approximations of \eqref{eq:anasol} % $u^m(T = 1, r)$ 
    at $t=1$ when $\kappa=1$, $\mu=2$, $c=\frac1{\sqrt{2}}$, by (i) various D2Q5, D2Q9 kinetic schemes, 
    with various values of parameters $\alpha,\gamma,\lambda,\omega$,
    % $\alpha\in\{\frac12,\frac14\}$,$\gamma\in\{\frac18,\frac14\}$,$\omega\in\{1,2\}$, 
    and (ii) the % standard 
    FD, FV schemes with CFL = $1/2$ -- on $\Dcal_T=(-4,4)^2$ --.}
    \label{fig:convergence_FV_BGK}
\end{figure}

%\section{Conclusions}\label{sec:conclusions}
%We have proposed a methodology for the construction of (vector) kinetic schemes approximating (linear) symmetric-hyperbolic systems of conservation laws, in the case of linear elastodynamics. The resulting scheme as well at both first and second order, compared with standard FD and FV schemes. In particular, one can define a set of parameters for which the proposed kinetic solver is the most accurate.

%Treatment of boundary conditions ??

\section*{Acknowledgments}
We would like to acknowledge the help of ANR with grant ANR-15-CE01-0013 for the SEDIFLO project. We also thank Lucas Br\'elivet for its help in a preliminary version of our numerical implementation with {\tt basilisk}\footnote{http://basilisk.fr/}.

\appendix

\section{Finite Difference and Finite Volume schemes}

\subsection{Finite-Difference scheme}
\label{sec:FD}

Each ``acoustic'' sub-system $(cF^m_1,cF^m_2,u^m)\equiv(v,w,p)$ for $m=1,2$ can be numerically simulated
using a standard (leap-frog, centered) Finite-Difference scheme%for $(cF^m_1,cF^m_2,u^m)\equiv(v,w,p)$ solution to \eqref{eq:acoustic} on $\Dcal_T$
, often named after Yee \cite{Yee1966}.
%\cite{gustafsson-kreiss-oliger-2013,oreilly2017}
% ** first by ** Yee for electromag (Yee1966) Yee, K. S. Numerical Solution of Initial Boundary Value Problems Involving Maxwell’s Equations in Isotropic Media IEEE Transactions on Antennas and Propagation, 1966, 14, 302-307
% ** next ** J. Virieux, SH-wave propagation in heterogeneous media: Velocity-stress finite-difference method. Geophysics 49 (1984) 1933–1957
% ** Graves1996
Denoting staggered nodal approximations
$$
p_{ij}^n\approx p(t^n,i\Delta,j\Delta)
\quad
w_{ij+\frac12}^{n+\frac12}\approx w(t^n+\frac{k}2,i\Delta,j\Delta+\frac{\Delta}2)
\quad
v_{i+\frac12,j}^{n+\frac12}\approx w(t^n+\frac{k}2,i\Delta+\frac{\Delta}2,j\Delta)
$$
the following Yee scheme writes
\begin{align}
\label{eq:vhn}
v_{i+\frac12,j}^{n+\frac12} & = v_{i+\frac12,j}^{n-\frac12} + \frac{ck}{\Delta} (p_{i+1,j}^n-p_{ij}^n)
\\
\label{eq:whn}
w_{ij+\frac12}^{n+\frac12} & = w_{ij+\frac12}^{n-\frac12} + \frac{ck}{\Delta} (p_{ij+1}^n-p_{ij}^n)
\\
p_{ij}^{n+1} & = p_{ij}^n + \frac{ck}{\Delta} (v_{i+\frac12,j}^{n+\frac12} - v_{i-\frac12,j}^{n+\frac12} + w_{ij+\frac12}^{n+\frac12} - w_{ij-\frac12}^{n+\frac12}) % + g_{ij}^{n+\frac12}
\label{eq:phn}
\end{align}
yields approximations such as the space-continuous reconstructions
$$
\sum_{ij\in\Ical} p_{ij}^n \psi_{ij} \approx p(t^n)
\quad
\sum_{ij\in\Ical} w_{ij+\frac12}^n \psi_{ij+\frac12} \approx w(t^n)
\quad
\sum_{ij\in\Ical} v_{i+\frac12,j}^n \psi_{i+\frac12,j} \approx v(t^n)
$$
using the tensor-product $\QQ_{1,2}$ Finite-Element basis functions $\psi_{ij},\psi_{i+\frac12,j},\psi_{i+\frac12,j}$.
The latter % approximations
converges in $L^2(\Dcal)$ % or discrete FD norm 
to smooth $(v,w,p)\in H^2(\RR^2)$
with an error rate $O(\Delta^2)$ as $\Delta = c k\to0$
\cite{Monk-ConvergenceAnalysisYees-1994}. % possibly non uniform !
% using e.g. periodic boundary conditions on $\Dcal_T$, but we do not care
%
A proof for  $p$ % e.g.
consists in bounding above the RHS of
\begin{multline}
\label{bound}
\|p(t^n)-\sum_{ij\in\Ical} p_{ij}^n \psi_{ij}\|_{L^2(\Dcal)} % ^2
\le 
\|p(t^n)-\sum_{ij\in\Ical}  p(t^n,i\Delta,j\Delta) \psi_{ij}\|_{L^2(\Dcal)} % ^2
\\
+ \sup_{ij\in\Ical} \|\psi_{ij}\|_{L^2(\Dcal)} % ^2 
\sqrt{ \sum_{ij\in\Ical} | p(t^n,i\Delta,j\Delta)-p_{ij}^n |^2 }
\end{multline}
by the sum of an upper-bound for each term:
the first term in RHS of \eqref{bound} can be bounded above by $C_1\Delta^2\|p(t^n)\|_{H^2(\Dcal)}$ 
with a constant $C_1$ function of $p$ % at $t^n$ 
but independent of $\Delta$ using a standard interpolation result % by FE functions -- cite Ern of Brenner Scott ?
% Bramble-Hilbert + scaling for a shape-regular mesh : eg (13.28) in 10.1007/978-3-030-56341-7
and the second term in RHS of \eqref{bound} can be bounded above by $C_2\Delta^2\sup_{t\in[0,t^n]}\|p(t)\|_{H^2(\Dcal)}$ using a standard a priori analysis for the pointwise errors % difference 
like $p(t^n,i\Delta,j\Delta)-p_{ij}^n$ which are solution to (\ref{eq:vhn}--\ref{eq:whn}--\ref{eq:phn}) with additional error terms due to Taylor approximation (the bound holds thanks to the discrete $\ell^2$ stability of the scheme and the consistency of the initial conditions).
Smoothness of the % unique ! unequivocal in its class
reference solution is key to the error estimation.

% Of course, one well-known difficulty with FD methods is that they oscillate % especially in staggered energy-preserving schemes % Gibbs
% and do \emph{not} satisfy a discrete maximum principle. % -- can be consistent yet,if one handles non-positivity in conserv/consistent way

\subsection{Finite-Volume methods} 
\label{sec:FV}

Explicit-in-time Finite-Volume (FV) methods (using time discretization like % RK or
the Euler forward method)
are also standardly used to discretize \eqref{sys:elasto}. % on $\Dcal_T$ with periodic boundary conditions.
% mostly with a view to numerically simulating less regular solutions, % also unique here in linear case
Unlike the FD method above, they provide one with \emph{co-located} cell-averaged approximations $q_{ij}^n$ for $q(t^n,x,y) = (F_1^m, F_2^m, u^m)$ over the control volumes $C_{ij} = (i\Delta\pm\frac{\Delta}2,j\Delta\pm\frac{\Delta}2)$ defined by
$$
q_{ij}^n \approx % \bar q_{ij}(t):=
\frac1{\Delta^2}\int_{C_{ij}} q(t^n,x,y) dxdy.
% \approx q(t^n,i\Delta,j\Delta)
$$
%of the system components $q^l$.
%
The generic FV scheme writes % for \eqref{sys:elasto}
\begin{equation}
\label{FVscheme}
q_{ij}^{n+1} = q_{ij}^n + \frac{k}{\Delta} (\Fcal_{i,j-\frac12} + \Fcal_{i-\frac12,j} - \Fcal_{i,j+\frac12} - \Fcal_{i+\frac12,j})
\end{equation}
which requires formulas for the numerical fluxes $\Fcal_{i\pm\frac12,j}$ and $\Fcal_{i,j\pm\frac12}$  % \approx \int_{t^n}^{t^{n+1}}
approximating the fluxes $F_a(q)$ of the conservation laws \eqref{eq:scl} at cell faces
\begin{align*}
& \Fcal_{i\pm\frac12,j} \simeq \frac{1}{k\Delta}\int_{t^n}^{t^{n+1}}\int_{y_{j-\frac12}}^{y_{j+\frac12}} F_1(q(x_{i\pm\frac12}, y, t)) dydt, \\
& \Fcal_{i,j\pm\frac12} \simeq \frac{1}{k\Delta}\int_{t^n}^{t^{n+1}}\int_{x_{i-\frac12}}^{x_{i+\frac12}} F_2(q(x,y_{j\pm\frac12}, t)) dxdt.
\end{align*}
A standard Two-Points (TP) flux is % so-called Godunov ?
the \emph{upwind} flux
% Godunov/upwind flux, equivalent to
% Roe \cite{(4.61) p.84]{leveque-2002} \cite[4.3.1, Chapter V]{Godlewski-Raviart2021} in linear case
% a kind of Lax-Friedrichs scheme if CFL=1 \cite[Ex.~3.2, Chapter V]{Godlewski-Raviart2021}
% \begin{align}
% \label{LFflux}
% & \Fcal_{n,i+\frac12,j}:=\frac12\left(f_1(q_{n,i+1,j})+f_1(q_{n,ij})\right)-\frac{|A_1|}2\left(q_{n,i+1,j}-q_{n,ij}\right)
% % \approx\frac1{\Delta}\int_{x\in(i\Delta\pm\frac{\Delta}2)} f_1(q(t_n,i\Delta+\frac{\Delta}2,y)) dy
% \\
% & \Fcal_{n,ij+\frac12}:=\frac12\left(f_2(q_{n,ij+1})+f_2(q_{n,ij})\right)-\frac{|A_2|}2\left(q_{n,ij+1}-q_{n,ij}\right)
% \end{align}
% on denoting $|A_a|$ the ``absolute value'' (symmetric !!) matrix
% when the flux vectors are linear functionals $f_a(q)=A_a q$
% !! when nonlinear (Godunov: approximate Riemann solver) 
which yields for each ``acoustic'' sub-system with variable $(F^m_1,F^m_2,u^m)\equiv(v,w,p)$, $m\in\{1,2\}$
%(i.e. $l\in\{1,2,3\}$ or $l\in\{4,5,6\}$)
\begin{align}
\label{eq:vhnFV}
v_{ij}^{n+1} = v_{ij}^n + \frac{kc}{2\Delta} ( & p_{i+1,j}^n-p_{i-1,j}^n 
\\
\nonumber & \quad - 2 v_{ij}^n + v_{i+1,j}^n + v_{i-1,j}^n )
\\
\label{eq:whnFV}
w_{ij}^{n+1} = w_{ij}^n + \frac{kc}{2\Delta} ( & p_{ij+1}^n-p_{ij-1}^n
\\
\nonumber & \quad - 2 w_{ij}^n + w_{ij+1}^n + w_{ij-1}^n )
\\ \nonumber
p_{ij}^{n+1} = p_{ij}^n + \frac{kc}{2\Delta} ( & v_{i+1,j}^n - v_{i-1,j}^n + w_{ij+1}^n - w_{ij-1}^n
\\  & \quad - 4 p_{ij}^n + p_{ij+1}^n + p_{ij-1}^n + p_{i+1,j}^n + p_{i-1,j}^n ) 
\label{eq:phnFV}
\end{align}
where, in comparison with the ``staggered'' scheme (\ref{eq:vhn}--\ref{eq:phn}), 
new terms appear that are not obviously consistent but can be interpreted as artificial diffusion,
small as $O(\Delta^2)$ noticeably.
Now, as is well-known, the latter ``non-consistent'' terms do not prevent convergence: in the case of smooth solutions,
the $O(\Delta^2)$ scaling of those terms even preserves for (\ref{eq:vhnFV}--\ref{eq:phnFV}) the % first-order 
convergence rate observed for (\ref{eq:vhn}--\ref{eq:phn}). A proof is as follows:
% strong
Consistency errors $V_{ij}^n = v_{ij}^n - v(kn,\Delta i,\Delta j)$, $W_{ij}^n = w_{ij}^n - w(kn,\Delta i,\Delta j)$, 
$P_{ij}^n = p_{ij}^n - p(kn,\Delta i,\Delta j)$ % for _strong_ solutions
satisfy
\begin{align}
\label{eq:VhnFV}
V_{ij}^{n+1} & = V_{ij}^n + \frac{kc}{2\Delta} ( P_{i+1,j}^n-P_{i-1,j}^n - 2 V_{ij}^n + V_{i+1,j}^n + V_{i-1,j}^n ) 
+ RV_{ij}^n
\\ 
\label{eq:WhnFV}
W_{ij}^{n+1} & = W_{ij}^n + \frac{kc}{2\Delta} ( P_{ij+1}^n-P_{ij-1}^n - 2 W_{ij}^n + W_{ij+1}^n + W_{ij-1}^n ) 
+ RW_{ij}^n
\\
P_{ij}^{n+1} & = P_{ij}^n + \frac{kc}{2\Delta} ( V_{i+1,j}^n - V_{i-1,j}^n + W_{ij+1}^n - W_{ij-1}^n
\\ & \quad \nonumber - 4 P_{ij}^n + P_{ij+1}^n + P_{ij-1}^n + P_{i+1,j}^n + P_{i-1,j}^n ) 
+ RP_{ij}^n
\label{eq:PhnFV}
\end{align}
using Taylor formulas, with remainder terms $O(k^2)+O(\Delta^2)$ % assuming H^2 solutions
\begin{multline}
RV_{ij}^n = - \int_{t^n}^{t^{n+1}}(t^{n+1}-t)\partial_{tt}^2v(t,i\Delta,j\Delta)dt \\
+ \frac{kc}{2\Delta} \int_{i\Delta}^{(i+1)\Delta}((i+1)\Delta-x)\partial_{xx}^2p(kn,x,j\Delta)dx
- \frac{kc}{2\Delta} \int_{i\Delta}^{(i-1)\Delta}((i-1)\Delta-x)\partial_{xx}^2p(kn,x,j\Delta)dx
\\
+ \frac{kc}{2\Delta} \int_{i\Delta}^{(i+1)\Delta}((i+1)\Delta-x)\partial_{xx}^2v(kn,x,j\Delta)dx
+ \frac{kc}{2\Delta} \int_{i\Delta}^{(i-1)\Delta}((i-1)\Delta-x)\partial_{xx}^2v(kn,x,j\Delta)dx
\end{multline}
\begin{multline}
RW_{ij}^n = - \int_{t^n}^{t^{n+1}}(t^{n+1}-t)\partial_{tt}^2w(t,i\Delta,j\Delta)dt \\
+ \frac{kc}{2\Delta} \int_{j\Delta}^{(j+1)\Delta}((j+1)\Delta-y)\partial_{yy}^2p(kn,i\Delta,y)dy
- \frac{kc}{2\Delta} \int_{j\Delta}^{(j-1)\Delta}((j-1)\Delta-y)\partial_{yy}^2p(kn,i\Delta,y)dy
\\
+ \frac{kc}{2\Delta} \int_{j\Delta}^{(j+1)\Delta}((j+1)\Delta-y)\partial_{yy}^2v(kn,i\Delta,y)dy
+ \frac{kc}{2\Delta} \int_{j\Delta}^{(j-1)\Delta}((j-1)\Delta-y)\partial_{yy}^2v(kn,i\Delta,y)dy
\end{multline}
\begin{multline}
RP_{ij}^n = - \int_{t^n}^{t^{n+1}}(t^{n+1}-t)\partial_{tt}^2p(t,i\Delta,j\Delta)dt
\\
+ \frac{kc}{2\Delta} \int_{(i\Delta}^{(i+1)\Delta}((i+1)\Delta-x)\partial_{xx}^2v(kn,x,j\Delta)dx
- \frac{kc}{2\Delta} \int_{(i\Delta}^{(i-1)\Delta}((i-1)\Delta-x)\partial_{xx}^2v(kn,x,j\Delta)dx
\\
+ \frac{kc}{2\Delta} \int_{(i\Delta}^{(i+1)\Delta}((i+1)\Delta-x)\partial_{xx}^2p(kn,x,j\Delta)dx
+ \frac{kc}{2\Delta} \int_{(i\Delta}^{(i-1)\Delta}((i-1)\Delta-x)\partial_{xx}^2p(kn,x,j\Delta)dx
\\
+ \frac{kc}{2\Delta} \int_{(j\Delta}^{(j+1)\Delta}((j+1)\Delta-y)\partial_{yy}^2w(kn,i\Delta,y)dy
- \frac{kc}{2\Delta} \int_{(j\Delta}^{(j-1)\Delta}((j-1)\Delta-y)\partial_{yy}^2w(kn,i\Delta,y)dy
\\
+ \frac{kc}{2\Delta} \int_{(j\Delta}^{(j+1)\Delta}((j+1)\Delta-y)\partial_{yy}^2p(kn,i\Delta,y)dy
+ \frac{kc}{2\Delta} \int_{(j\Delta}^{(j-1)\Delta}((j-1)\Delta-y)\partial_{yy}^2p(kn,i\Delta,y)dy
\end{multline}
as long as $(v,w,p)$ are $C^2$ smooth solutions.
% Multiplying \eqref{eq:VhnFV}--\eqref{eq:WhnFV}--\eqref{eq:PhnFV} by $V_{ij}^n$,$W_{ij}^n$, $P_{ij}^n$ % respectively
% and summing, the flux terms cancel % on infinite or periodic domain
% and one obtains
% \begin{multline}
% \sum_{ij} \left( |V_{ij}^{n+1}|^2 + |W_{ij}^{n+1}|^2 +  |P_{ij}^{n+1}|^2 \right)
% = 
% \sum_{ij} \left( |V_{ij}^n|^2 + |W_{ij}^n|^2 + |P_{ij}^n|^2 \right)
% \\
% + \sum_{ij} \left( |V_{ij}^{n+1}-V_{ij}^n|^2 + |W_{ij}^{n+1}-V_{ij}^n|^2 + |P_{ij}^{n+1}-V_{ij}^n|^2 \right)
% \\
% + 2\sum_{ij} \left(RV_{ij}^nV_{ij}^n + RW_{ij}^nW_{ij}^n + RP_{ij}^nP_{ij}^n \right)
% \end{multline}
% which allows to conclude to convergence as $k\le \frac1{2c} \Delta \to 0$ % at any fixed time, in l^2 norm
% using \eqref{eq:VhnFV}--\eqref{eq:WhnFV}--\eqref{eq:PhnFV} again to bound
% $$
% \sum_{ij} \left( |V_{ij}^{n+1}-V_{ij}^n|^2 + |W_{ij}^{n+1}-V_{ij}^n|^2 + |P_{ij}^{n+1}-V_{ij}^n|^2 \right)
% $$
% \textbf{ %%%%%%%%%%%%%%%%%%%%%%%%%%%
% by % Young inequality and 
% a ``standard'' weak-BV estimate TBC ?? % this would imply a O(\Delta^\frac12) rate I believed
% Or cancellation finally yield the upwind scheme convergence to smooth solutions at rate $O(\Delta)$ ?
% % better than on unstructured grid see also \cite{Jovanovic2004}
% % similar to implicit \cite{croisille-phd-1990}: RK should not improve anything \cite{S0025-5718-2015-03022-1}
% % optimal rate for upwind in scalar case: \cite{ Richter-OptimalOrderErrorEstimate-1988} 
% {\color{blue} TBC}
% } %%%%%%%%%%%%%%%%%%%%%%%%%%%%%%%%%%
Then, on a square domain $[0,1]^2$ % sufficiently-large to cover compact IC
with periodic boundary conditions say (for the sake of simplicity: see our numerical Section~\ref{sec:numerical} for an illustration when that situation applies), one can % standardly 
study convergence using the Fourier modes $\hat Q^{n}_{gh}$ of the error vector $Q^n_{ij}= \sum_{g,h\in\NN^2} \hat Q^{n}_{gh} e^{2i\pi(ig+hj)}\in\RR^3$
with components $V_{ij}^n = v_{ij}^n - v(kn,\Delta i,\Delta j)$, $W_{ij}^n = w_{ij}^n - w(kn,\Delta i,\Delta j)$, 
$P_{ij}^n = p_{ij}^n - p(kn,\Delta i,\Delta j)$ following Von Neumann. The Fourier modes satisfy % project on Fourier basis 
$$
\hat Q^{n+1}_{gh} = C(g,h) \hat Q^n_{gh} + \hat R^n_{gh}
$$
with $C(g,h)$ the amplification matrix of the linear scheme (\ref{eq:VhnFV}--\ref{eq:WhnFV}--\ref{eq:PhnFV}),
% see e.g. \cite[Chapter 1]{godlewski-raviart-1996}, 
and $C(g,h)$ is power-bounded as long as $k\in (\epsilon_0,\frac2{c})\Delta$ see e.g. \cite{godlewski-raviart-1996,Coulombel2014}: then the upwind scheme is $\ell^2$ % and strongly 
stable and convergence to smooth solutions holds at ``first order'' i.e. % with rate $\Delta$:
$$
\sqrt{\sum_{ij} |Q^N_{ij}|^2 = \sum_{gh} |\hat Q^N_{ij}|^2} \le % sum_n=1^N O(k^2)+O(\Delta^2)
\mathcal{C}_N \Delta
$$
with $\mathcal{C}_N$ independent of $\Delta$, insofar as $R^n_{gh}=O(k^2+\Delta^2)$ and $\sum_{ij} |Q^0_{ij}|^2 = O(\Delta^2)$.
%
% \cite{Vila-Villedieu2003} requires $k\le \frac1{4c} \Delta$ -- but treats non-cartesian grids
% check in Lesaint thesis \cite{Lesaint-1976} ?? (possibly scalar case only)
%
% however it is not ``strongly consistent'' and one cannot conclude to convergence straightforwardly
% despite what \cite{Vila-Villedieu2003} says (before proving convergence O(\Delta^\frac12) for unstructured)

To improve the convergence rate, % on arbitrary meshes ??
one classically considers
\begin{align}
\label{FVschemeHeun}
q_{ij}^{(1)} 
& = q_{ij}^n + \frac{k}\Delta (\Fcal_{i,j-\frac12} + \Fcal_{i-\frac12,j} - \Fcal_{i,j+\frac12} - \Fcal_{i+\frac12,j}),
\\
q_{ij}^{n+1} & = \frac{q_{ij}^n}2 + \frac{q_{ij}^{(1)}}2
 + \frac{k}{2\Delta} (\Fcal_{i,j-\frac12}^{(1)} + \Fcal_{i-\frac12,j}^{(1)} - \Fcal_{i,j+\frac12}^{(1)} - \Fcal_{i+\frac12,j}^{(1)})
\end{align}
based on a second-order time-integrator like RK2 method % SSP
% here applied to the finite volume scheme \eqref{FVscheme}
instead of \eqref{FVscheme}, with numerical fluxes $\Fcal_{i,j\pm\frac12}^{/(1)},\Fcal_{i\pm\frac12,j}^{/(1)}$
consistently improved, i.e. using at cell faces
%
% MUSCL by Van Leer as opposed to ENO  by Harten, Osher, Engquist, Chakravarthy and WENO by Liu, Osher, Chan -- and Shu?
% \begin{equation}
% \label{LFflux2}
% %F_{ij+\frac12}:=
% \frac12\left(A^1q_{ij+{\tfrac12}^+}(t)+A^1q_{ij+{\tfrac12}^-}(t)\right)
% -\frac{|A_1|}2\left(q_{ij+{\tfrac12}^+}(t)-q_{ij+{\tfrac12}^-}(t)\right)
% \end{equation}
% for $\mathcal{F}_{ij+\frac12}$ 
% -- which do not necessarily ensure a higher-order convergence rate for general (smooth) multi-dimensional solutions\footnote{
%     Even without using the -- here useless anyway -- slope-limiters as
%     \begin{multline}
%     q_{ij+{\tfrac12}^+}
%     % = q_{ij+1} - \frac\Delta2\mathop{\rm minmod}\left(\frac{q_{ij+1}-q_{ij}}\Delta,\frac{q_{ij+2}-q_{ij+1}}\Delta\right)  % ,\frac{q_{ij+2}-q_{ij}}{2\Delta}
%     % \\
%     = q_{ij+1} - \mathop{\rm minmod}\left(q_{ij+1}-q_{ij},q_{ij+2}-q_{ij+1}\right)/2  % ,\frac{q_{ij+2}-q_{ij}}2
%     \end{multline}
% } that are non-aligned with the mesh % despite Dumbser JCP ?? -- also: the proof remains a challenge s00574-016-0163-9
%
a piecewise bilinear reconstruction of the fields $q$. Precisely, the numerical fluxes are computed from the value of reconstructed variables $\tilde{q}$ evaluated at cell faces rather than the cell value $q$.
A linear reconstruction $\tilde{q}$ on a cell $C_{ij}$ has the form
\begin{equation} \label{recfct}
\tilde{q}(x,y) = c_0 + c_1\varphi_1(x,y) + c_2\varphi_2(x,y) 
\end{equation}
with $c_0, c_1, c_2$ coefficients to be determinated, using basis functions with zero mean: % on $C_{ij}$
$$
\varphi_1 = \frac{x-x_i}{\Delta}, \quad \varphi_2 = \frac{y-y_j}{\Delta}.
$$
The coefficients $c_k$ are typically defined such that $\tilde{q}$ conserves exactly cell-averaged value $q_0 = q_{ij}^n$ on the central cell $C_0 = C_{ij}$, so $c_0 = q_0$, while it matches some averaged state for a given set $\{C_l\}_l$ of neighbour cells % of $C_0$
in a {\em least-square sense}: %. In other words
$$
\tilde{q} = {\rm arg\,min} \sum_{C_l}\left(q_l - \frac{1}{\Delta^2}\int_{C_l}\tilde{q}dxdy\right)^2.
$$
Consequently, the coefficients $c_1$ and $c_2$ are least-square solution of the (overdeterminated) linear system
 $$
 \sum_{k=1}^2 c_k \bar{\varphi}_{kl} = q_l - q_0 
 $$  
with $\bar{\varphi}_{kl} =  \frac{1}{\Delta^2}\int_{C_l}\varphi_kdxdy$, i.e. the averaged value of the basis function $\varphi_k$ over $C_l$. Introducing the matrix $\mathbf{A}$ whose coefficients are $A_{kl} = \bar{\varphi}_{kl}$ and denoting $\mathbf{B} = (\mathbf{A}^T\mathbf{A})^{-1}\mathbf{A}^T$ (easily pre-computed for the case of Cartesian mesh) one obtains
%the coefficents
\begin{equation}\label{reccoef}
c_k  = \sum_{l}B_{kl} (q_l - q_0).
\end{equation}
In the D2Q5 case with neighbour cells $\{C_l\}_l = \{C_1, C_2, C_3, C_4\}$ we obtain
\begin{gather*}
\mathbf{A} = \begin{pmatrix}
1 & 0 \\
0 & 1 \\
-1 & 0 \\
0 & -1
\end{pmatrix}, \quad 
\mathbf{B} = \begin{pmatrix}
\frac12 & 0 & -\frac12 & 0 \\
0 & \frac12 & 0 & -\frac12 \\
\end{pmatrix} , \\
c_1 = \frac{q_1 - q_3}{2}, \quad c_2 = \frac{q_2 - q_4}{2}.
\end{gather*}
The resulting linear reconstruction $\tilde{q}$ is none other than the classical centered discretization of gradient in each direction.

\end{document}